\documentclass[11pt,a4paper,final]{article}

\usepackage[T1]{fontenc}
\usepackage[latin1]{inputenc}
\usepackage[british,english,danish]{babel}
\usepackage{amsmath,amssymb,amsthm,amsfonts}
\usepackage[final]{graphicx}
\usepackage{showkeys}
\usepackage{stmaryrd}
\usepackage{textcomp}
\usepackage{mathrsfs}
\usepackage{cancel}
\usepackage{tikz}
\usepackage{enumitem}
\usepackage[pass]{geometry}
\usepackage{soul}
\usepackage{textcomp}
\usepackage{nicefrac}
\usepackage{xcolor}




\def\dotminus{\mathbin{\ooalign{\hss\raise1ex\hbox{.}\hss\cr
  \mathsurround=0pt$-$}}}
  

\setlist[enumerate,1]{font=\itshape, label={\textnormal{(}\roman*\textnormal{)}}}
\setlist[itemize,1]{label=\raisebox{0.35ex}{\tiny$\bullet$}}

\DeclareFontFamily{U}{matha}{\hyphenchar\font45}
\DeclareFontShape{U}{matha}{m}{n}{
    <5> <6> <7> <8> <9> <10> gen * matha
    <10.95> matha10 <12> <14.4> <17.28> <20.74> <24.88> matha12
}{}
\DeclareSymbolFont{matha}{U}{matha}{m}{n}
\DeclareFontSubstitution{U}{matha}{m}{n}
\DeclareMathSymbol{\mysubset}{3}{matha}{"80}
\newcommand\subsetsim{\mathrel{%
  \ooalign{\raise0.4ex\hbox{$\mysubset$}\cr\hidewidth\raise -0.5ex\hbox{\scalebox{0.8}{$\sim$}}\hidewidth\cr}}}
  
\DeclareFontFamily{U}{matha}{\hyphenchar\font45}
\DeclareFontShape{U}{matha}{m}{n}{
    <5> <6> <7> <8> <9> <10> gen * matha
    <10.95> matha10 <12> <14.4> <17.28> <20.74> <24.88> matha12
}{}
\DeclareSymbolFont{matha}{U}{matha}{m}{n}
\DeclareFontSubstitution{U}{matha}{m}{n}
\DeclareMathSymbol{\mysubset}{3}{matha}{"80}
\newcommand\subsetapprox{\mathrel{%
  \ooalign{\raise0.4ex\hbox{$\mysubset$}\cr\hidewidth\raise -0.5ex\hbox{\scalebox{0.8}{$\approx$}}\hidewidth\cr}}}

\renewcommand{\a}{\bar{a}}
\newcommand{\x}{\bar{x}}

\renewcommand{\b}{\bar{b}}
\newcommand{\y}{\bar{y}}
\renewcommand{\r}{\bar{r}}

\newcommand{\A}{\mathcal{A}}
\newcommand{\Q}{\mathbb{Q}}
\newcommand{\T}{\mathcal{T}}

\newcommand{\B}{\mathcal{B}}

\newcommand{\K}{\mathcal{K}}
\newcommand{\D}{\mathscr{D}}

\newcommand{\N}{\mathbb{N}}

\newcommand{\R}{\mathbb{R}}

\newcommand{\U}{\mathbb{U}}

\newcommand{\M}{\mathcal{M}}
\renewcommand{\L}{\mathcal{L}}

\renewcommand{\epsilon}{\varepsilon}
\renewcommand{\models}{\vDash}
\renewcommand{\phi}{\varphi}
\renewcommand{\subset}{\subseteq}
\renewcommand{\P}{\mathbb{P}}
\renewcommand{\hat}{\widehat}

\newcommand{\biimp}{\Longleftrightarrow}

\newcommand{\into}{\hookrightarrow}

\renewcommand{\hat}{\widehat}
\DeclareMathOperator{\res}{\upharpoonright}
\DeclareMathOperator{\iso}{Iso}
\DeclareMathOperator{\aut}{Aut}

\setcounter{page}{1}
\title{Automorphism groups of universal diversities}
\author{Andreas Hallbäck}
\date{\today}
\newtheorem{definition}{Definition}[section]
\newtheorem{theorem}[definition]{Theorem}
\newtheorem{introtheorem}{Theorem}
\newtheorem{lemma}[definition]{Lemma}
\newtheorem{prop}[definition]{Proposition}
\newtheorem{cor}[definition]{Corollary}
\theoremstyle{definition}

\newtheorem*{examples}{Examples}
\newtheorem*{axiom of choice}{Axiom of Choice}
\theoremstyle{definition}\newtheorem{remark}[definition]{Remark}

\begin{document}
\maketitle
\selectlanguage{english}
\begin{abstract}
We prove that the automorphism group of the Urysohn diversity is a universal Polish group. Furthermore we show that the automorphism group of the rational Urysohn diversity has ample generics, a dense conjugacy class and that it embeds densely into the automorphism group of the (full) Urysohn diversity. It follows that this latter group also has a dense conjugacy class. 
\end{abstract}

\section{Introduction}
Diversities were introduced by Bryant and Tupper in \cite{BryantTupper12} and further developed in \cite{BryantTupper14} in order to generalise applications of metric space theory to combinatorial optimisation and graph theory to the hypergraph setting. The idea is very simple: Instead of only assigning real numbers to pairs of elements, a diversity assigns a real number to every finite subset of the space. This turns out to generalise metric spaces quite nicely, and in \cite{BryantTupper12, BryantTupper14} the authors prove diversity versions of a number of results concerning or using metric spaces. The term \emph{diversity} comes from a special example of a diversity that appears in phylogenetics and ecological diversities demonstrating the broad variety of applications of diversities and of mathematics in general of course. The precise definition of a diversity is as follows: 
\begin{definition}
A \textnormal{\textbf{diversity}} is a set $X$ equipped with a map $\delta$, the \textnormal{\textbf{diversity map}}, defined on the finite subsets of $X$ to $\R$ such that for all finite $A,B,C\subset X$ we have
\begin{itemize}
\item[(D1)] $\delta(A)\geq 0$ and $\delta(A)=0$ if and only if $|A|\leq 1$. 
\item[(D2)] If $B\neq \emptyset$ then $\delta(A\cup B) + \delta(B\cup C) \geq \delta(A\cup C)$.
\end{itemize}
\end{definition}
As an abuse of language we will follow \cite{Urydiv1,Urydiv2} and from time to time refer to the diversity map as a diversity as well. Hopefully this confusion of names will not cause confusion for the reader. 

The following observation is useful and is easy to verify. 
\begin{lemma}
\textit{(D2)} holds for $\delta$ if and only if the following two conditions hold:
\begin{itemize}[noitemsep]
\vspace{-5pt}
\item[(D2')] \textnormal{\textbf{Monotonicity}}, i.e.~if $A\subset B$ then $\delta(A)\leq \delta(B)$. 
\item[(D2'')]  \textnormal{\textbf{Connected sublinearity}}, i.e.~if $A\cap B\neq \emptyset$ then 
\[\delta(A\cup B) \leq \delta(A) + \delta(B).\] 
\end{itemize}
\end{lemma}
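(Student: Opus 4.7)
The plan is to handle the two directions separately, with the reverse direction being essentially immediate and the forward direction requiring two small tricks plus an induction.

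For the ``if'' direction, assume (D2') and (D2''). Given $A, B, C$ with $B \neq \emptyset$, observe that $(A\cup B) \cap (B\cup C) \supseteq B \neq \emptyset$, so (D2'') applied to the pair $A\cup B$, $B\cup C$ gives
\[
\delta(A\cup B \cup C) \;\leq\; \delta(A\cup B) + \delta(B\cup C).
\]
Since $A\cup C \subseteq A\cup B\cup C$, monotonicity (D2') yields $\delta(A\cup C) \leq \delta(A\cup B\cup C)$, and chaining the two inequalities produces (D2).

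For the ``only if'' direction, assume (D2). The connected sublinearity (D2'') is a single clean application of (D2): if $A\cap B \neq \emptyset$, pick any $y \in A\cap B$ and apply (D2) with the three sets $A$, $\{y\}$, $B$. Since $y\in A$ and $y \in B$, we have $A\cup\{y\} = A$ and $\{y\}\cup B = B$, so (D2) immediately gives $\delta(A) + \delta(B) \geq \delta(A\cup B)$.

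Monotonicity (D2') requires a short induction on $|B\setminus A|$, and it suffices to prove the one-step statement $\delta(A) \leq \delta(A\cup\{x\})$ for $x \notin A$. If $A = \emptyset$ this is just (D1). Otherwise apply (D2) with $A' = \emptyset$, $B' = \{x\}$, $C' = A$: the three unions are $\{x\}$, $A\cup\{x\}$, $A$, so
\[
\delta(\{x\}) + \delta(A\cup\{x\}) \;\geq\; \delta(A),
\]
and $\delta(\{x\}) = 0$ by (D1), giving the desired inequality. The main ``insight'' --- and really the only step that is not forced --- is choosing $A' = \emptyset$ in this last application, which is exactly what makes the singleton term vanish via (D1). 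Beyond that, no genuine obstacles arise.
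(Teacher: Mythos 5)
Your proof is correct, and all three steps (monotonicity via (D2) with the empty set and a singleton, connected sublinearity via a common point, and the converse via (D2'') followed by (D2')) are exactly the standard verification. The paper states this lemma without proof (``easy to verify''), so there is nothing to compare against; your argument fills the gap as intended, including the correct observation that (D1) is needed to kill the $\delta(\{x\})$ term in the monotonicity step.
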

Another general observation to make is that any diversity is automatically a metric space since the map $d(a,b)=\delta(\{a,b\})$ defines a metric. We refer to this metric as the \textbf{induced metric}. A diversity is \textbf{complete}, respectively \textbf{separable}, if the induced metric is complete, respectively separable. A bijective map $f\colon X\to Y$ between two diversities that preserves the values of the diversity map will be called an \textbf{isoversity}. If $Y=X$ we will call $f$ an \textbf{autoversity} or simply an automorphism of $X$. The group of all autoversities of a diversity $X$ is denoted by $\aut(X)$. 

An important observation to make is that for each $n\in \N$ the diversity map induces a uniformly continuous map $\delta^n$ on $X^n$ given by
\[ \delta^n(x_0,\ldots ,x_{n-1}) = \delta(\{x_0,\ldots ,x_n\}).\] 
We will use this fact several times, so we include it here for the convenience of the reader. The proof can be found both in \cite{Urydiv1} and in \cite[Lemma 21]{Urydiv2}. 

\begin{lemma}[{\cite[Lemma 21]{Urydiv2}}]\label{Lemma21Urydiv2}
Let $(X,\delta)$ be a diversity and for $n\in \N$ let $\delta^n$ denote the map on $X^n$ that $\delta$ induces. Then $\delta^n$ is $1$-Lipschitz in each argument. It follows that for all $\x,\y\in X^n$ we have
\[ |\delta^n(\x) - \delta^n(\y)| \leq \sum \delta(x_i,y_i). \]
In particular $\delta^n$ is uniformly continuous. 
\end{lemma}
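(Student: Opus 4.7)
The plan is to obtain the $1$-Lipschitz property in each coordinate by a single application of axiom \textit{(D2)}, and then chain these one-coordinate estimates together using the triangle inequality for $|\cdot|$ to get the global bound. Uniform continuity will then be immediate.

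For the one-variable Lipschitz estimate, I would fix $\bar x = (x_0,\ldots,x_{n-1})\in X^n$, an index $k$, and a point $y\in X$, and write $\bar x'$ for the tuple obtained from $\bar x$ by replacing $x_k$ with $y$. Applying \textit{(D2)} to $A=\{x_k\}$, $B=\{y\}$, and $C=\{x_i:i\neq k\}$ -- here $B$ is nonempty as required, while $A$, $B$, $C$ are allowed to overlap, which takes care of the case where $y$ or $x_k$ coincides with some other $x_i$ -- yields
\[ \delta(\{x_k,y\}) + \delta^n(\bar x') \;\geq\; \delta^n(\bar x), \]
since $A\cup C$ is the underlying set of $\bar x$ and $B\cup C$ is that of $\bar x'$. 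Swapping the roles of $x_k$ and $y$ produces the reverse inequality, so $|\delta^n(\bar x)-\delta^n(\bar x')|\leq \delta(\{x_k,y\})$, which is exactly $1$-Lipschitzness in the $k$-th argument.

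For the global bound, given $\bar x,\bar y\in X^n$, I would interpolate by the tuples $\bar z^{(k)}=(y_0,\ldots,y_{k-1},x_k,\ldots,x_{n-1})$ for $k=0,\ldots,n$, so that $\bar z^{(0)}=\bar x$, $\bar z^{(n)}=\bar y$, and consecutive tuples differ in exactly one coordinate. The one-coordinate bound from the previous paragraph combined with the triangle inequality for $|\cdot|$ then gives
\[ |\delta^n(\bar x)-\delta^n(\bar y)| \;\leq\; \sum_{k=0}^{n-1}|\delta^n(\bar z^{(k)})-\delta^n(\bar z^{(k+1)})| \;\leq\; \sum_{k=0}^{n-1}\delta(\{x_k,y_k\}), \]
which is the claimed estimate. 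Uniform continuity of $\delta^n$ on $X^n$ equipped with the sum metric (and hence also the $\sup$ or any equivalent product metric) follows at once, since the right-hand side goes to $0$ as all $d(x_i,y_i)\to 0$.

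I do not anticipate any real obstacle: the argument is essentially one invocation of \textit{(D2)} followed by bookkeeping. The only point that requires a moment of care is to notice that \textit{(D2)} does not require the three sets $A$, $B$, $C$ to be disjoint, so the application above is legitimate even when $y$ equals one of the $x_i$ or when some of the $x_i$ coincide.
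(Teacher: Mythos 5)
Your proof is correct and is exactly the intended argument (the paper itself defers to the cited references rather than proving the lemma): a single application of (D2) with $A=\{x_k\}$, $B=\{y\}$, $C=\{x_i:i\neq k\}$ gives the one-coordinate Lipschitz bound, and the telescoping interpolation yields the summed estimate. Your remark that (D2) does not require $A$, $B$, $C$ to be disjoint is precisely the point that makes the degenerate cases harmless, so nothing is missing.
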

In fact, to ease our notation, we will hardly discern between the maps $\delta^n$ and the diversity map itself. Hence we will from time to time write $\delta(\a)$ for an ordered tuple $\a=(a_0,\ldots ,a_n)$ instead of writing $\delta(\{a_0,\ldots ,a_n\})$. It will be clear from the context what is meant, so this causes no confusion. 

The interest in diversities from the viewpoint of Polish group theory began with the paper \cite{Urydiv1}. There the authors construct a diversity analogue of the Urysohn metric space by adapting Kat\v etov's construction to the diversity setting. They call the resulting space the \emph{Urysohn diversity}, denoted by $\U$, and show, among other things, that the metric space it induces is the Urysohn metric space. The existence of such a universal object among diversities gives rise to a plethora of questions concerning its automorphism group, $\aut(\U)$, since this group is virtually unstudied. The first main result of this paper is the following (cf.~Theorem \ref{universal} below):

\begin{introtheorem}
$\aut(\U)$ is a universal Polish group. 
\end{introtheorem}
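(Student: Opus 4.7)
The plan is to adapt Uspenskij's proof that the isometry group of the Urysohn metric space is a universal Polish group to the diversity setting: for every Polish group $G$ I will exhibit a closed topological group embedding $\rho\colon G \hookrightarrow \aut(\U)$. By the Birkhoff-Kakutani theorem, fix a compatible left-invariant metric $d$ on $G$; after passing to the completion I may assume $(G,d)$ is complete and separable. Turn $G$ into a diversity $(G,\delta_G)$ by the \emph{diameter} construction $\delta_G(A)=\diam_d(A)$: axioms (D1) and (D2) are straightforward and the induced metric of $\delta_G$ is $d$, and left-invariance of $d$ implies that the left-translation action of $G$ on itself is by autoversities of $(G,\delta_G)$.

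The central step is to embed $(G,\delta_G)$ into $\U$ in a $G$-equivariant way, via an isoversity $\iota\colon G\hookrightarrow \U$ onto its image such that each left translation $L_g$ extends to some $\rho(g)\in \aut(\U)$. I do this by a Kat\v etov-style tower for diversities based on the construction of \cite{Urydiv1}. Starting from $X_0=(G,\delta_G)$, I recursively define $X_{n+1}$ by adjoining a countable dense family of Kat\v etov-type extension data, i.e.~real-valued functions on finite subsets of $X_n$ describing how a potential new point would relate diversity-wise to finite subsets of $X_n$. Each $X_{n+1}$ contains $X_n$ as a sub-diversity, and autoversities of $X_n$ extend to autoversities of $X_{n+1}$ through the natural action on Kat\v etov data $(g\cdot f)(A)=f(g^{-1}A)$. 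The completion of $\bigcup_n X_n$ satisfies the one-point extension property characterising the separable complete Urysohn diversity in \cite{Urydiv1}, hence is isomorphic to $\U$ as a diversity; transferring the $G$-action along this isomorphism yields $\rho$.

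It remains to check that $\rho$ is a closed topological group embedding. Injectivity is clear because $\rho(g)\circ\iota=\iota\circ L_g$. For continuity, if $g_n\to g$ in $G$ then $\rho(g_n)(\iota(x))=\iota(g_n x)\to \iota(gx)$ for every $x\in G$, and this extends to pointwise convergence on all of $\U$ by density of $\bigcup_n X_n$ together with Lemma \ref{Lemma21Urydiv2}. Conversely, pointwise convergence $\rho(g_n)\to \rho(g)$ evaluated at $\iota(e)$ gives $g_n\to g$ in $d$; closedness of $\rho(G)$ in $\aut(\U)$ then follows from completeness of $(G,d)$ since any limit point of $\rho(G)$ evaluated at $\iota(e)$ lies in the closed set $\iota(G)\subseteq \U$.

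The main obstacle is the equivariant Kat\v etov tower. While Kat\v etov functions on metric spaces extend functorially under isometries in an obvious way, the correct diversity analogue must assign values to all finite subsets of $X_n$ subject to higher-arity constraints generalising $|f(x)-f(y)|\leq d(x,y)\leq f(x)+f(y)$. Verifying that the extension operator commutes with autoversities, and that sufficiently many Kat\v etov-type extension data exist at each stage to realise the one-point extension property of $\U$, is where most of the technical work lies; here I would lean heavily on the explicit Kat\v etov construction of $\U$ carried out in \cite{Urydiv1}.
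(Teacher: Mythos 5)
Your proposal is correct and follows essentially the same route as the paper: embed $G$ by left translations into the autoversity group of the diameter diversity on its left completion, then push the automorphism group up a Kat\v etov tower of finitely supported admissible maps (with the equivariant action $(g\cdot f)(A)=f(g^{-1}A)$) whose completion has the extension property and is therefore $\U$. The only quibble is that the completion of $G$ under a left-invariant metric is in general only a monoid rather than a group, so one should say that $G$ acts by isometries on the left completion $G_L$ instead of ``assuming $(G,d)$ is complete''; this does not affect the argument, which only uses that action.
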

The proof of this theorem follows Uspenskij's proof of the corresponding fact for the metric space in \cite{usp90}. It uses the Kat\v etov-like construction of the Urysohn diversity done in \cite{Urydiv1}. We have therefore, for the convenience of the reader, included the main ingredients of this construction. 

The other main results of the paper are the following (cf.~Corollary \ref{conjugacy} and Theorem below \ref{ample}):

\begin{introtheorem}\label{introconjugacy}
$\aut(\U)$ has a dense conjugacy class.  
\end{introtheorem}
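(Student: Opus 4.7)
The plan is to obtain the dense conjugacy class in $\aut(\U)$ by transferring it from the automorphism group of the rational Urysohn diversity $\Q\U$. Concretely I would first show that the class $\K$ of finite diversities taking rational values on all finite subsets is a Fra\"iss\'e class whose limit is $\Q\U$; the Kat\v etov-style construction of \cite{Urydiv1} adapts once one verifies that rationality is preserved under amalgamation. Then, by the Kechris--Rosendal criterion, $\aut(\Q\U)$ has a dense conjugacy class if and only if the class $\K_1$ of pairs $(A,p)$ with $A\in\K$ and $p$ a partial automorphism of $A$ has the joint embedding property: given $(A_1,p_1)$ and $(A_2,p_2)$, one must produce $(B,q) \in \K_1$ together with embeddings of the $A_i$ into $B$ whose images of $p_i$ extend to $q$. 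I would construct such a $B$ by freely amalgamating $A_1$ and $A_2$ over the empty structure in $\K$ and then iteratively applying Kat\v etov extensions to build up $q$.

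The second component is a continuous dense embedding $\aut(\Q\U) \hookrightarrow \aut(\U)$. Every autoversity of $\Q\U$ is an isometry of its induced metric, so by Lemma \ref{Lemma21Urydiv2} it is uniformly continuous with respect to all the maps $\delta^n$ and hence extends uniquely to an autoversity of $\U$; this gives a continuous homomorphism. For density, any basic neighbourhood of $g \in \aut(\U)$ is determined by the requirement of sending a finite tuple in $\U$ approximately to another prescribed tuple, and by universality of $\U$ one can approximate both tuples by rational configurations, realising such a partial map inside $\Q\U$, which by Fra\"iss\'e homogeneity extends to an autoversity of $\Q\U$. I would then invoke the elementary fact that if $H$ is a dense subgroup of a Polish group $G$ and $h \in H$ has dense $H$-conjugacy class, then its $G$-conjugacy class is dense in $G$: approximate any target $g_0 \in G$ first by some $h' \in H$ and then $h'$ by a conjugate of $h$ in $H$.

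The chief obstacle will be the joint embedding property for $\K_1$, i.e.~extending two partial automorphisms of finite rational diversities to a common partial automorphism of a rational amalgam. Unlike in the metric space case, where Solecki's theorem on extending partial isometries underwrites the corresponding step, here one must control the diversity map on all finite subsets of the amalgam simultaneously, not merely pairwise distances. The combinatorial book-keeping under iterated Kat\v etov-type extensions --- in particular ensuring that one neither increases the diversity of any configuration beyond its prescribed amalgamated value nor loses rationality --- is where the real work will lie.
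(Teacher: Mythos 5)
Your overall strategy coincides with the paper's: build $\U_{\Q}$ as the Fra\"iss\'e limit of the finite rational diversities, get a dense conjugacy class in $\aut(\U_{\Q})$ from the Kechris--Rosendal criterion, embed $\aut(\U_{\Q})$ densely and continuously into $\aut(\U)$, and transfer. There are two issues: one place where you have manufactured a difficulty that is not there, and one where you have skipped a difficulty that is.

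First, the JEP for the class of systems $(A,p)$ is not the hard part, and no iterated Kat\v etov extensions or Solecki-type extension theorem is needed. A system only asks for a \emph{partial} automorphism of the ambient structure, so given $(A_1,p_1)$ and $(A_2,p_2)$ one simply takes $B=A_1\sqcup A_2$ with $\delta_B$ equal to $\delta_{A_1}$ on subsets of $A_1$, to $\delta_{A_2}$ on subsets of $A_2$, and equal to a fixed rational $N>\delta_{A_1}(A_1),\delta_{A_2}(A_2)$ on every mixed subset; then $q=p_1\cup p_2$ is already a partial isoversity of $B$, since it maps pure subsets to pure subsets and mixed subsets to mixed subsets. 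Checking that this $\delta_B$ satisfies (D2) is routine. The genuinely hard extension-of-partial-automorphisms problem you describe (the diversity analogue of Solecki's theorem, proved in the paper via Herwig--Lascar) is only needed for the weak amalgamation property of $n$-systems, i.e.\ for ample generics, not for a dense conjugacy class.

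Second, your density argument has a real gap. Universality lets you replace $\a$ and $g(\a)$ by nearby rational tuples $\x$ and $\y$, but $\x$ and $\y$ will in general only be \emph{approximately} isomorphic as finite diversities, and Fra\"iss\'e ultrahomogeneity only extends genuine isomorphisms between finite substructures. To pass from ``$\epsilon$-isomorphic'' to ``there is an isomorphic copy of $\x$ pointwise within $\epsilon$ of $\y$'' you need a propinquity-type lemma: one must construct a single diversity assignment on the disjoint union of the two tuples restricting to the two given assignments and with each $\delta(x_i,y_i)$ controlled by the $d_\infty$-distance between the assignments. The paper proves exactly this (by taking minima over connected covers split between the two tuples, offset by half that distance), in analogy with Rosendal's pair propinquity for the Urysohn metric space; it does not follow from universality and ultrahomogeneity alone. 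Once that lemma is in place, the rest of your transfer argument (a dense subgroup with a dense conjugacy class yields a dense conjugacy class in the ambient group) is correct and is what the paper does.
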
 

\begin{introtheorem}\label{introample}
The automorphism group of the the rational Urysohn diversity has ample generics.
\end{introtheorem}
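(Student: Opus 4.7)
The plan is to apply the Kechris--Rosendal criterion for ample generics. The rational Urysohn diversity $\U_\Q$ is the Fra\"iss\'e limit of the class $\K$ of finite diversities whose diversity map takes values in $\Q$, and by the Kechris--Rosendal theorem $\aut(\U_\Q)$ has ample generics provided that for every $n \in \N$ the class of elements of $\K$ equipped with an $n$-tuple of partial isoversities has the joint embedding property and the weak amalgamation property.

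My strategy is to reduce both properties to a single Hrushovski-type extension property (EPPA) for $\K$: for every $X \in \K$ and every finite collection of partial isoversities $p_1,\ldots,p_n$ of $X$, there exist $Y \in \K$ with $X \subset Y$ and autoversities $\hat p_1,\ldots,\hat p_n \in \aut(Y)$ such that each $\hat p_i$ extends $p_i$. Combined with the amalgamation property for $\K$ (which is essentially established by the Kat\v etov construction of \cite{Urydiv1}), EPPA yields JEP and WAP for $n$-systems of partial isoversities by the standard argument: amalgamate the $n$-systems freely in $\K$ and then close off the partial maps to total automorphisms inside a common extension. The conclusion of ample generics is then immediate from Kechris--Rosendal.

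The main obstacle is thus to establish EPPA for $\K$. I would adapt Solecki's argument for the rational Urysohn metric space: enumerate the orbits of the partial maps $p_1,\ldots,p_n$ and close them off one at a time by amalgamating copies of $X$ along the partial isoversities, at each step extending the diversity to new finite subsets by taking the minimum value consistent with monotonicity (D2$'$) and connected sublinearity (D2$''$). Because the $p_i$ are partial isoversities of a \emph{rational} diversity, the resulting values remain rational, and a finite number of such steps suffices because $X$ is finite and the values lie in a discrete subgroup of $\Q$. The delicate point, which does not arise in the metric case, is that the extension must be controlled coherently on \emph{all} finite subsets, not just on pairs, so I would verify using Lemma \ref{Lemma21Urydiv2} together with the Kat\v etov-style construction of \cite{Urydiv1} that the minimum-path extension is indeed a diversity and that iterating it terminates in a finite object of $\K$.
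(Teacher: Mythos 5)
Your reduction is the same as the paper's: apply the Kechris--Rosendal criterion (Theorem \ref{ampleiffwapjep}), get JEP for $n$-systems cheaply, and derive WAP from a Hrushovski-type extension theorem (EPPA) for finite rational diversities combined with the free amalgam of Definition \ref{deffreeamal}. (One small correction: the amalgamation property for finite rational diversities is not ``essentially established by the Kat\v etov construction''; it requires the connected-cover free amalgam of Lemma \ref{freeamal}, whose verification is short but not automatic.) That part of your outline is sound and is exactly how the paper proceeds.

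The genuine gap is in your proof of EPPA itself, which is the entire content of the paper's Theorem \ref{finite extension} and the hard part of the argument. The procedure you describe --- enumerate the orbits of $p_1,\ldots,p_n$, close them off one at a time by freely amalgamating copies of $X$, and conclude termination ``because $X$ is finite and the values lie in a discrete subgroup of $\Q$'' --- does not work. The obstruction to EPPA is not the set of diversity values (which indeed stays finite and rational under your minimum-path extension) but the growth of the \emph{underlying set}: each time you extend the domain of one partial map you create new points on which the other maps (and the first map itself) are again only partially defined, and the naive free closure is the infinite treelike structure on which the maps generate a free action. Forcing this to be finite means identifying points, and controlling the identifications so that no forbidden collapse of diversity values occurs is precisely the combinatorial difficulty; it is nontrivial already for a single partial map, and for graphs it is Hrushovski's theorem. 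Solecki's proof for the rational Urysohn metric space, which you say you would adapt, is not an orbit-closing argument: it encodes the metric as a relational structure and invokes the Herwig--Lascar theorem on extending partial automorphisms of structures omitting weak homomorphisms from a prescribed finite family. The paper does the same for diversities: it introduces relations $R_{(r,n)}$ recording the value and cardinality of each subset, defines the forbidden configurations $\M_{\alpha,\beta}$ whose image would violate $r_0 \leq \sum_{i\geq 1} r_i$, applies \cite[Theorem 3.2]{HerLas2000} to obtain a finite $\T$-free extension $C$ with all partial automorphisms extended, and then --- a step with no metric analogue of comparable delicacy --- recovers a diversity on the connected component $B$ of $A$ in $C$ via minimal connections, using $\T$-freeness to show $\delta_B$ restricts to $\delta_A$. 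Without this (or an equivalent combinatorial input), your argument does not close.
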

Here a \emph{rational} diversity simply means that the diversity map only takes rational values. We will denote the rational Urysohn diversity by $\U_{\Q}$.  Of course the first thing we need to do, is to show that $\U_{\Q}$ actually exists. We do this by showing that the class of all finite rational diversities is a so-called \emph{Fraïssé class}. It follows that this class has a \emph{Fraïssé limit} and this limit is the rational Urysohn diversity. Moreover we show that the completion of $\U_{\Q}$ is the Urysohn diversity, thus providing a new proof of the existence of $\U$. For the convenience of the reader we have included a short introduction to Fraïssé theory in Section \ref{fraisse theory} where we also define a useful amalgamation of diversities that generalises the free amalgamation of metric spaces. 

Once the existence of $\U_{\Q}$ is established, it is easy to show that $\aut(\U_{\Q})$ has a dense conjugacy class by applying a theorem of Kechris and Rosendal from \cite{KechRos2007}. Furthermore, we will show that $\aut(\U_{\Q})$ embeds densely into $\aut(\U)$ (cf.~Theorem \ref{embedsdensely} below) from which Theorem \ref{introconjugacy} follows immediately. 

Afterwards we show that $\aut(\U_{\Q})$ has \emph{ample generics}. Ample generics is a property with many strong implications such as the \emph{automatic continuity property}, the \emph{small index property} and the fact that the group cannot be the union of countably many non-open subgroups. All of these notions will be explained below. Theorem \ref{introample} follows from an extension theorem for diversities inspired by a result of Solecki in \cite{Solecki2005} and another theorem of Kechris and Rosendal from \cite{KechRos2007}.


\section{Universality of $\boldsymbol{\aut(\U)}$}\label{Universality of Aut(U)}
The first thing we need is to introduce some terminology and definitions from \cite{Urydiv1}.
In that paper, the authors construct the Urysohn diversity by adapting Kat\v etov's construction of the ditto metric space to the diversity setting. As Uspenskij did in \cite{usp90}, we shall use this construction to prove that $\aut(\U)$ is a universal Polish group. The first thing we need to introduce is the diversity analogue of Kat\v etov functions. These are the so-called \emph{admissible} maps. First we let $[X]^{<\omega}$ denote the finite subsets of $X$. 
\begin{definition}\label{admis}
Let $(X,\delta_X)$ be a diversity. A map $f\colon [X]^{<\omega}\to \R$ is \textnormal{\textbf{admissible}} if the following holds:
\begin{enumerate}[noitemsep]
\item $f(\emptyset)=0$,
\item $f(A)\geq \delta_X(A)$ for every $A$,
\item $f(A\cup C)+\delta_X(B\cup C)\geq f(A\cup B)$ for all $A,B,C$ with $C\neq\emptyset$,
\item $f(A)+f(B)\geq f(A\cup B)$.
\end{enumerate}
The set of all admissible maps on $(X,\delta_X)$ is denoted $E(X)$.
\end{definition}

The reason why these maps are called admissible is because they define diversity extensions as the lemma below tells us. 

\begin{lemma}[{\cite[Lemma 2]{Urydiv1}}]
Let $(X,\delta)$ be a diversity and let $f\colon [X]^{<\omega}\to \R$. Then $f\in E(X)$ if and only if for some $y$ the map $\hat{\delta}\colon [X\cup\{y\}]^{<\omega}\to \R$ given by
\[ \hat{\delta}(A)=\delta(A),\quad \hat{\delta}(A\cup\{y\})=f(A) \]
for $A\subset X$ finite, defines a diversity map on $X\cup \{y\}$. 
\end{lemma}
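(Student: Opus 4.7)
The plan is to verify the biconditional by direct axiom-matching, treating the easier $(\Leftarrow)$ direction first.

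$(\Leftarrow)$: Suppose $\hat\delta$ is a diversity on $X\cup\{y\}$. Each admissibility axiom reads off immediately from the axioms for $\hat\delta$. Axiom (1) is $f(\emptyset)=\hat\delta(\{y\})=0$, which is just (D1). Axiom (2) is monotonicity (D2') applied to the inclusion $A\subset A\cup\{y\}$. Axiom (3) is (D2) applied to the triple $\bigl(A\cup\{y\},\,C,\,B\bigr)$ with $C\neq \emptyset$. Axiom (4) is connected sublinearity (D2'') applied to the pair $A\cup\{y\}$ and $B\cup\{y\}$, whose intersection always contains~$y$.

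$(\Rightarrow)$: Suppose $f\in E(X)$, let $y$ be a fresh point, and define $\hat\delta$ as in the statement. For (D1), nonnegativity on sets containing $y$ comes from (2), while the vanishing condition reduces to showing $\hat\delta(\{x,y\})=f(\{x\})>0$ for $x\in X$, which follows from (4) applied to $\{x\}$ and any second point $\{x'\}\subset X$ together with (2). For (D2), decompose $A,B,C\subset X\cup\{y\}$ as $A'\cup(A\cap\{y\})$ and so on, with $A',B',C'\subset X$, and case-split on which of the three sets contain $y$. If none do, (D2) for $\delta$ applies directly. If only $A$ (or symmetrically only $C$) contains $y$, the inequality is exactly (3). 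If only $B$ contains $y$, the required bound $f(A\cup B')+f(B'\cup C)\geq \delta(A\cup C)$ follows from (2) on both summands combined with (D2) for $\delta$ when $B'\neq\emptyset$, and from (4) followed by (2) when $B'=\emptyset$. If $y$ lies in two or three of the sets, both sides carry an $f$-value; apply (3) to the side involving $B$ to produce an inequality of the form $f(\cdot)+\delta(\cdot)\geq f(\cdot)$, then use (2) to dominate the remaining $f$-term on the left by the $\delta$-term that appeared, chaining to the desired inequality.

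The principal obstacle is the bookkeeping in the (D2) case-split: one must track whether each restriction $A',B',C'$ is empty and dispatch the degenerate subcases by (4) or a direct application of (2). Once the cases are laid out, each is a one-line inequality in the admissibility axioms together with the diversity axioms of $\delta$.
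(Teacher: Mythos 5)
The paper itself gives no proof of this lemma (it is quoted from \cite[Lemma 2]{Urydiv1}), so I can only assess your argument on its own terms. Your backward direction is correct, and your case analysis for (D2) in the forward direction is sound: every case does reduce to one of the admissibility axioms as you describe, with axiom (3) available whenever the relevant copy of $B\setminus\{y\}$ is nonempty and axiom (4) covering the degenerate subcases. The genuine gap is in your verification of (D1). You claim $f(\{x\})>0$ for every $x\in X$ and deduce it from (4) applied to $\{x\},\{x'\}$ together with (2); but that argument only yields $f(\{x\})+f(\{x'\})\geq f(\{x,x'\})\geq\delta(\{x,x'\})>0$, which does not force either summand to be positive. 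Worse, the claim is simply false: the map $\kappa_x(A)=\delta(A\cup\{x\})$ is admissible (this is exactly how $X$ embeds into $E(X)$) and satisfies $\kappa_x(\{x\})=0$. For such an $f$, adjoining a genuinely new point $y$ produces only a pseudo-diversity, since $\hat\delta(\{x,y\})=0$ violates the ``only if $|A|\leq 1$'' half of (D1).

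The statement survives because of the quantifier ``for some $y$'': when $f(\{x\})=0$ for some $x\in X$ you must take $y=x$ rather than a fresh point. To justify this, show that $f(\{x\})=0$ forces $f=\kappa_x$: axiom (4) gives $f(B)\geq f(B\cup\{x\})$, which is $\geq\delta(B\cup\{x\})$ by (2), while axiom (3) with $A=\emptyset$ and $C=\{x\}$ gives $f(B)\leq f(\{x\})+\delta(B\cup\{x\})=\delta(B\cup\{x\})$; hence $f=\kappa_x$ and $\hat\delta$ is just $\delta$ on $X\cup\{y\}=X$. In the remaining case $f(\{x\})>0$ for all $x\in X$, a fresh $y$ works and the rest of your argument goes through. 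With this dichotomy inserted, the proof is complete.
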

Similarly to the metric setting, we can define a diversity map on the set of admissible maps. This is defined as follows. 

\begin{definition}[{\cite[Page 5]{Urydiv1}}]
Let $(X,\delta)$ be a diversity. On $[E(X)]^{<\omega}$ we define a map $\hat{\delta}$ by
\[ \hat{\delta}(\{f_1,\ldots ,f_n\}) = \max_{j\leq k}\sup\Big\{ f_j\big(\bigcup_{i\neq j} A_i\big) - \sum_{i\neq j} f_i(A_i)\  \Big|\  A_i\subset X \text{ finite}\Big\}\]
whenever $n\geq 2$ and $\hat{\delta}(f)=\hat{\delta}(\emptyset)=0$. 
\end{definition}
Observe that 
\[ \hat{\delta}(f_1,f_2) = \sup_{B \text{ finite}} | f_1(B) - f_2(B)|.\]
Moreover, as the notation suggests, $\hat{\delta}$ is a diversity map on $E(X)$.

\begin{theorem}[{\cite[Theorem 3]{Urydiv1}}]
Let $(X,\delta)$ be a diversity. Then $(E(X),\hat{\delta})$ is a diversity and $(X,\delta)$ embeds into $(E(X),\hat{\delta})$ via the map $x\mapsto \kappa_x$ where $\kappa_x(A) = \delta(A\cup \{x\})$. 
\end{theorem}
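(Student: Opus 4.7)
The theorem splits into three verifications: that $\hat{\delta}$ satisfies (D1), that it satisfies (D2), and that the map $x\mapsto \kappa_x$ lands in $E(X)$ and preserves the diversity. Throughout I would freely use the four admissibility clauses as well as (D2') and (D2'') for $\delta$.

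For (D1), non-negativity on a set $\{f_1,\ldots ,f_n\}$ with $n\geq 2$ is immediate: plug in $A_i=\emptyset$ for every $i\neq j$; since $f_i(\emptyset)=0$, the resulting contribution to the sup is $0$, so $\hat{\delta}\geq 0$. To see that $\hat{\delta}(\{f_1,\ldots ,f_n\})>0$ whenever $n\geq 2$, pick two distinct $f_1\neq f_2$ in the set and a finite $B\subset X$ where they differ, say $f_1(B)>f_2(B)$; taking $j=1$, $A_2=B$, and $A_i=\emptyset$ otherwise yields a strictly positive summand.

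The central difficulty is (D2): $\hat{\delta}(\mathcal{A}\cup\mathcal{B})+\hat{\delta}(\mathcal{B}\cup\mathcal{C})\geq\hat{\delta}(\mathcal{A}\cup\mathcal{C})$ for finite $\mathcal{A},\mathcal{B},\mathcal{C}\subset E(X)$ with $\mathcal{B}\neq\emptyset$. Given $\epsilon>0$, choose $f_j\in\mathcal{A}\cup\mathcal{C}$ together with finite sets $(A_i)_{i\neq j}$ witnessing $\hat{\delta}(\mathcal{A}\cup\mathcal{C})$ to within $\epsilon$; by symmetry assume $f_j\in\mathcal{A}$. Picking any $g\in\mathcal{B}$, the plan is to decompose
\[ f_j\Big(\bigcup_{i\neq j} A_i\Big) - \sum_{i\neq j} f_i(A_i) = \Big[f_j\Big(\bigcup_{i\neq j} A_i\Big) - \sum_{i\in\mathcal{A}\setminus\{j\}} f_i(A_i) - g(D)\Big] + \Big[g(D) - \sum_{i\in\mathcal{C}} f_i(A_i)\Big], \]
for a well-chosen $D=\bigcup_{i\in\mathcal{C}} A_i$ (or a slight variant); padding the remaining members of $\mathcal{B}$ by $A_i'=\emptyset$, the first bracket becomes a candidate for the sup defining $\hat{\delta}(\mathcal{A}\cup\mathcal{B})$ with distinguished index $j$, and the second a candidate for $\hat{\delta}(\mathcal{B}\cup\mathcal{C})$ with distinguished index $g$. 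Admissibility condition (3) for $g$ and the $f_i$ is what handles the degenerate cases where some $A_i$ are empty, since we must match the union appearing in the first bracket with the single support of $g$ used in the second. This is the one step where I expect careful bookkeeping to be required.

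Finally, for the embedding, verifying $\kappa_x\in E(X)$ is routine: clause (1) is $\delta(\{x\})=0$; (2) is monotonicity; (3) is a direct instance of (D2) applied to $A\cup\{x\}$, $C$, $B$; and (4) follows from connected sublinearity applied to $A\cup\{x\}$ and $B\cup\{x\}$, which share $x$. For the diversity-preservation $\hat{\delta}(\kappa_{x_1},\ldots ,\kappa_{x_n})=\delta(\{x_1,\ldots ,x_n\})$, the lower bound comes from setting $A_i=\{x_i\}$ for each $i\neq j$, which makes the subtracted terms vanish and yields exactly $\delta(\{x_1,\ldots ,x_n\})$. The reverse inequality reduces to showing
\[ \delta\Big(\{x_j\}\cup\bigcup_{i\neq j} A_i\Big) \leq \delta(\{x_1,\ldots ,x_n\}) + \sum_{i\neq j}\bigl(\delta(\{x_i\}\cup A_i) - \delta(\{x_i\})\bigr), \]
which one obtains by iteratively applying (D2) with the $x_i$ as pivots, telescoping from $x_j$ through the remaining points. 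Injectivity of $x\mapsto\kappa_x$ then follows from (D1) applied to the induced metric. The main obstacle remains step 2, the diversity analogue of Kat\v etov's inequality in the function space, which is complicated by admissibility being phrased for finite sets of arbitrary size rather than just pairs.
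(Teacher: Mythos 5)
First, a point of reference: the paper does not actually prove this theorem --- it is imported verbatim from \cite{Urydiv1} (Theorem 3 there) --- so there is no in-paper argument to compare against, and I am judging your proposal on its own merits. Your route is the natural Kat\v etov-style one and its core is sound: the add-and-subtract of $g(D)$ in the (D2) step is exactly right, since the first bracket is (after assigning $\emptyset$ to the remaining members of $\mathcal{B}$) a legitimate candidate in the sup defining $\hat{\delta}(\mathcal{A}\cup\mathcal{B})$ with $f_j$ distinguished, and the second bracket a candidate for $\hat{\delta}(\mathcal{B}\cup\mathcal{C})$ with $g$ distinguished; the (D1) and embedding verifications are correct as written. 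One correction to the bookkeeping you anticipate: empty $A_i$ are harmless (they contribute $f_i(\emptyset)=0$), and the genuine degenerate cases are coincidences such as $g=f_j$ (possible when $\mathcal{B}\subset\mathcal{A}\cup\mathcal{C}$) or an index lying in two of the three sets. These are handled by clause (4) (subadditivity: split $f_j(U\cup V)\leq f_j(U)+f_j(V)$ when $g=f_j$, or merge two assigned sets at a repeated index) together with monotonicity of admissible maps --- which follows from clause (3) with $C$ a singleton --- rather than by clause (3) directly as you suggest. You could sidestep most of the case analysis by verifying (D2') and (D2'') instead of (D2), since there the overlap point $g\in\mathcal{F}\cap\mathcal{G}$ is explicit.

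The one genuine omission is well-definedness: you never check that the supremum defining $\hat{\delta}$ is finite, and this is part of the assertion that $\hat{\delta}$ is a diversity map, since it must be real-valued. It is not automatic, and it is precisely where clauses (2) and (3) earn their keep. Fix $x_0\in X$. Clause (3) with $C=\{x_0\}$ gives $f_j\big(\bigcup_{i\neq j}A_i\big)\leq f_j(\{x_0\})+\delta\big(\bigcup_{i\neq j}A_i\cup\{x_0\}\big)$; connected sublinearity of $\delta$ (all the sets $A_i\cup\{x_0\}$ share $x_0$) followed by clauses (2) and (4) then yields
\[
f_j\Big(\bigcup_{i\neq j}A_i\Big)\;\leq\; f_j(\{x_0\})+\sum_{i\neq j}\delta(A_i\cup\{x_0\})\;\leq\; f_j(\{x_0\})+\sum_{i\neq j}\big(f_i(A_i)+f_i(\{x_0\})\big),
\]
so every term in the sup is bounded by $\sum_i f_i(\{x_0\})$. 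With this added and the degenerate cases in (D2) patched as above, your argument is complete.
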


Unfortunately, just like in the metric setting, $E(X)$ need not be separable even if $X$ is. Therefore we need to restrict ourselves to a subspace of $E(X)$ to maintain separability. This is the subspace of the \emph{finitely supported admissible maps.} These are defined as follows:

\begin{definition}
Let $(X,\delta)$ be a diversity and let $S\subset X$ be any subset. If $f\in E(S)$ then we define the extension of $f$ to $X$ by
\[f_S^X(A) = \inf\Big\{f(B) + \sum_{b\in B} \delta(A_b \cup \{b\})\  \Big|\  B\subset S \text{ finite}, \bigcup_{b\in B} A_b =A\Big\}\]
where $A\subset X$ is finite. We say that $S$ is the \textnormal{\textbf{support}} of $f_S^X$. 

The set of all \textnormal{\textbf{finitely supported admissible maps}} on $X$ is the set of all those $h\in E(X)$ such that for some finite $S\subset X$ and some $f\in E(S)$ we have $h=f_S^X$. This set will be denoted by $E(X,\omega)$.
\end{definition}

Of course one needs to check that the extension map $f_S^X$ is in fact admissible. We refer the reader to \cite[Lemma 6]{Urydiv1} for the details. It is also easy to check that $\kappa_x$  is supported on $\{x\}$ for any $x\in X$ and hence that $X$ embeds into $E(X,\omega)$. Therefore $(E(X,\omega),\hat{\delta})$ is a diversity extension of $X$. Moreover $E(X,\omega)$ is separable.

\begin{theorem}[{\cite[Theorem 9]{Urydiv1}}]
Let $(X,\delta)$ be a separable diversity. Then $(E(X,\omega),\hat{\delta})$ is a separable diversity as well. 
\end{theorem}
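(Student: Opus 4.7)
Since $E(X,\omega)$ inherits the diversity structure from $E(X)$, only separability of the induced metric remains. The plan is to mimic Uspenskij's argument for the space of finitely supported Kat\v etov functions. Fix a countable dense $D \subset X$. For each finite $S \subset D$, the set $E(S)$ is a closed subset of the finite-dimensional space $\R^{[S]^{<\omega}}$ cut out by the admissibility inequalities of Definition~\ref{admis}, so in the sup-norm it is itself separable; pick a countable dense $Q(S) \subset E(S)$. Let
\[ \mathcal{F} = \{f_S^X : S \subset D \text{ finite},\ f \in Q(S)\}, \]
a countable subset of $E(X,\omega)$. It suffices to show $\mathcal{F}$ is $\hat{\delta}$-dense.

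Fix $h = g_T^X \in E(X,\omega)$ with $T = \{t_1,\ldots,t_n\}$ and $g\in E(T)$, and let $\epsilon > 0$. Set $\eta = \epsilon/(10n)$ and choose $s_i\in D$ with $d(s_i,t_i) < \eta$; let $S = \{s_1,\ldots,s_n\}$. Define $f_0\colon [S]^{<\omega}\to \R$ by $f_0(\emptyset)=0$ and
\[ f_0(\{s_{i_1},\ldots,s_{i_k}\}) = g(\{t_{i_1},\ldots,t_{i_k}\}) + n\eta \]
on nonempty sets. By Lemma~\ref{Lemma21Urydiv2} each $\delta_X$-value on $[S]^{<\omega}$ differs from its $T$-counterpart by at most $n\eta$, and a routine check shows that the correction $n\eta$ absorbs the resulting Lipschitz errors in conditions (2) and (3) of Definition~\ref{admis}, while (4) transfers directly from $g$; hence $f_0 \in E(S)$. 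Finally pick $f \in Q(S)$ with $\sup_B |f(B) - f_0(B)| < \epsilon/5$, so that $f_S^X \in \mathcal{F}$.

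To estimate $\hat{\delta}(g_T^X, f_S^X)$, fix a finite $A\subset X$. Transporting any decomposition $A = \bigcup_b A_b$ (with $b$ ranging over a subset of $T$) through the bijection $t_i \leftrightarrow s_i$ gives a comparable decomposition indexed by $S$; by Lemma~\ref{Lemma21Urydiv2} each of the at most $n$ terms $\delta(A_b \cup \{b\})$ changes by at most $\eta$, while the corresponding $f_0$-value exceeds the $g$-value by exactly $n\eta$. Running this argument in both directions yields $|(f_0)_S^X(A) - g_T^X(A)| \leq 2n\eta = \epsilon/5$ uniformly in $A$. Combined with the immediate bound $|f_S^X(A) - (f_0)_S^X(A)| \leq \sup_B |f(B)-f_0(B)| < \epsilon/5$ from the infimum formula defining the extension, and the identity $\hat{\delta}(\phi_1,\phi_2) = \sup_B|\phi_1(B)-\phi_2(B)|$, we conclude $\hat{\delta}(g_T^X, f_S^X) < \epsilon$. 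The main obstacle is admissibility of $f_0$ on $S$: the correction $n\eta$ must be large enough to absorb the Lipschitz perturbations of $\delta_X$ under the bijection $t_i \mapsto s_i$, yet small enough that $f_0$ stays close enough to $g$ to force $\hat{\delta}(g_T^X, f_S^X) < \epsilon$ in the end.
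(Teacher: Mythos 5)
The paper itself does not prove this statement (it is quoted from \cite{Urydiv1}), so I am judging your argument on its own terms. Your overall architecture is the standard and correct one: a countable dense $D\subset X$, a countable dense $Q(S)\subset E(S)$ for each finite $S\subset D$ (legitimate, since $E(S)$ sits inside the finite-dimensional space $\R^{[S]^{<\omega}}$), and the extension operator $f\mapsto f_S^X$, with the final estimate assembled from $\hat{\delta}(\phi_1,\phi_2)=\sup_B|\phi_1(B)-\phi_2(B)|$. The two outer estimates are also fine: $\sup_{B}|f(B)-f_0(B)|<\epsilon/5$ does propagate through the infimum formula, and the transport bound $|(f_0)_S^X(A)-g_T^X(A)|\le 2n\eta$ would hold \emph{if} $f_0$ were admissible.

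The gap is precisely there: your $f_0=g\circ\phi^{-1}+n\eta$ (where $\phi\colon s_i\mapsto t_i$) need not satisfy condition (3) of Definition~\ref{admis}, and the additive correction cannot help, because in $f_0(A'\cup C')+\delta_X(B'\cup C')\ge f_0(A'\cup B')$ the constant $n\eta$ appears on both sides and cancels. What remains is the requirement $g(A\cup C)+\delta_X(B'\cup C')\ge g(A\cup B)$, where the middle term is now evaluated on the $S$-side and may be up to $n\eta$ \emph{smaller} than $\delta_X(B\cup C)$; nothing absorbs that loss. Concretely, take $T=\{t_1,t_2\}$ and $g=\kappa_{t_2}|_T$, so $g(\{t_1\})=\delta_X(t_1,t_2)$ and $g(\{t_2\})=0$; condition (3) for $f_0$ with $A'=\emptyset$, $B'=\{s_1\}$, $C'=\{s_2\}$ demands $\delta_X(s_1,s_2)\ge\delta_X(t_1,t_2)$, which fails whenever the approximating points happen to be slightly closer together than $t_1,t_2$ are. (The same cancellation already kills the analogous ``add a constant'' trick for Katětov functions in the metric case.) The standard repair is to not transport $g$ at all: the map $h=g_T^X$ is admissible on all of $X$, and the restriction of an admissible map to any subset is again admissible because conditions (1)--(4) are universally quantified; so set $f_0:=h|_S\in E(S)$ for free, and then prove $\hat{\delta}\bigl(h,(h|_S)_S^X\bigr)\le 2n\eta$ directly from the infimum formula (one inequality follows by iterating condition (3), the other by transporting a near-optimal decomposition $B\subset T$ to $B'=\phi^{-1}(B)\subset S$). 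With that substitution, and the minor caveat that the $s_i$ should be chosen pairwise distinct so that $\phi$ is a bijection, the rest of your proof goes through.
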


One can then iterate this construction and obtain a \emph{Kat\v etov tower} on a given diversity $X$ consisting of a sequence $(X_n,\delta_n)$ where $X_i$ embeds into $X_{i+1}$. The union $X_{\omega}$ of all of these diversities turns out to have an analogue of the extension property for metric spaces that characterises the Urysohn diversity. This extension property is defined as follows:
\begin{definition}
A diversity $(X,\delta_X)$ has the \textnormal{\textbf{approximate extension property}} if for any finite subset $F\subset X$, any admissible map $f$ defined on $F$ and any $\epsilon >0$, there is $x\in X$ such that $|f(A)-\delta_X(A\cup \{x\})|\leq \epsilon$ for $A\subset F$. 

If the above holds for $\epsilon=0$, $(X,\delta_X)$ has the \textnormal{\textbf{extension property}}. 
\end{definition}
Just like in the metric setting, it turns out that complete diversities with the approximate extension property actually has the extension property. Moreover, the completion of any separable diversity with the approximate extension property has the approximate extension property. Therefore we have:
\begin{prop}[{\cite[Lemmas 16 and 17]{Urydiv1}}]\label{actualext}
Suppose $(X,\delta_X)$ is a separable diversity with the approximate extension property. Then its completion has the extension property. 
\end{prop}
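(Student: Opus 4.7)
The proposition packages two implications: that the approximate extension property is inherited by the completion $\hat X$, and that on a complete diversity it upgrades to the exact extension property. I would prove them in this order.

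\textbf{Transfer to the completion.} Given a finite $F = \{a_1, \ldots, a_k\} \subset \hat X$, an admissible $f$ on $F$, and $\epsilon > 0$, I would approximate each $a_i$ by $a_i' \in X$ with $d(a_i, a_i')$ small enough, set $F' = \{a_1', \ldots, a_k'\}$, and transport $f$ to an admissible map $f'$ on $F'$ that differs from the naive pullback of $f$ by a small additive constant chosen so that conditions (1)--(4) of Definition \ref{admis} pass through. By Lemma \ref{Lemma21Urydiv2}, $|\delta(A) - \delta(A')|$ is bounded by $k \max_i d(a_i, a_i')$ for corresponding subsets, so the errors introduced in conditions (2) and (3) when pulling $f$ back from $F$ to $F'$ can be absorbed by the added constant. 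Applying the approximate extension property of $X$ to $f'$ yields $x \in X \subset \hat X$ close to realizing $f'$ on $F'$, and applying Lemma \ref{Lemma21Urydiv2} once more transfers the estimate to $f$ on $F$.

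\textbf{Upgrading to exact extension on a complete diversity.} Assume $\hat X$ is complete with the approximate extension property. Given $f$ admissible on finite $F \subset \hat X$, I would inductively build a Cauchy sequence $(x_n) \subset \hat X$ with $|\delta(A \cup \{x_n\}) - f(A)| \leq 2^{-n}$ for every $A \subset F$. To pass from $x_n$ to $x_{n+1}$, I extend $f$ to an admissible map $f_n$ on $F \cup \{x_n\}$ whose ``virtual new point'' sits within roughly $2^{-n}$ of $x_n$: the existence of such an extension hinges on the fact that $x_n$ itself already approximates that virtual new point to within $2^{-n}$, so declaring its distance from $x_n$ to be of order $2^{-n}$ is compatible with the connected sublinearity of $\delta$ on $F \cup \{x_n\}$. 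Applying the approximate extension property to $f_n$ with precision $2^{-n-2}$ then yields $x_{n+1}$ with $d(x_n, x_{n+1}) \leq 2^{-n} + 2^{-n-2}$ and $|\delta(A \cup \{x_{n+1}\}) - f(A)| \leq 2^{-n-2}$ for all $A \subset F$. The resulting sequence is Cauchy by comparison with a geometric series, and its limit $x^*$ satisfies $\delta(A \cup \{x^*\}) = \lim_n \delta(A \cup \{x_n\}) = f(A)$ for every $A \subset F$ by Lemma \ref{Lemma21Urydiv2}.

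\textbf{Main obstacle.} The delicate step in both parts is constructing the perturbed admissible map and verifying the four clauses of Definition \ref{admis} --- especially clause (3) --- in every configuration of where the auxiliary point lies among $A$, $B$, and $C$. Each case reduces to admissibility of $f$ together with either connected sublinearity of $\delta$ or the current running approximation bound, but the additive constants must be chosen so that all cascading error terms close simultaneously. Once the bookkeeping is done, the argument is a standard Cauchy-sequence completion in the spirit of Uspenskij's original proof of the analogous statement for the Urysohn metric space.
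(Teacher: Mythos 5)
Your two-step decomposition --- first pushing the approximate extension property down to the completion, then bootstrapping it to the exact extension property via a Cauchy sequence of successively better approximate realizations whose mutual distances are controlled by admissible extensions of $f$ over $F\cup\{x_n\}$ --- is precisely the route the paper takes: it does not prove the statement itself but delegates exactly this splitting to \cite[Lemmas 16 and 17]{Urydiv1}, as described in the paragraph preceding Proposition \ref{actualext}. The only point to watch in your write-up is the perturbation in the transfer step: a single additive constant cancels on both sides of clause (3) of Definition \ref{admis} and so does not by itself repair that inequality, which is why it is safer to transport $f$ by extending it over a common superdiversity of $F$ and $F'$ (e.g.\ via the operator $f_S^X$) and restricting, which gives the same $k\max_i d(a_i,a_i')$ error bound you invoke from Lemma \ref{Lemma21Urydiv2}.
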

Furthermore, as mentioned, this property characterises the Urysohn diversity, meaning that any two \emph{Polish} diversities, i.e.~with complete and separable induced metrics, that have the extension property are isomorphic. This is one of the main results of \cite{Urydiv1}.

\begin{theorem}[{\cite[Theorems 14 and 22]{Urydiv1}}]\label{extensioniso}
Any two Polish diversities both having the extension property are isomorphic. In particular any Polish diversity with the extension property is isomorphic to the Urysohn diversity. 
\end{theorem}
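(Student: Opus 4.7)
The plan is to run a back-and-forth argument, essentially copying the classical uniqueness proof for the Urysohn metric space but using admissible maps in place of Kat\v etov functions. Fix two Polish diversities $(X,\delta_X)$ and $(Y,\delta_Y)$ with the extension property. Pick countable dense sets $\{x_n\}_{n\in\N}\subset X$ and $\{y_n\}_{n\in\N}\subset Y$. I shall construct, by recursion on $n$, an increasing sequence of finite partial isoversities $g_n\colon F_n\to F_n'$ with $F_n\subset X$, $F_n'\subset Y$, and $\delta_Y(g_n(A))=\delta_X(A)$ for every $A\subset F_n$, while at the same time ensuring $x_n\in F_{2n+1}$ and $y_n\in F_{2n+2}'$. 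Taking the union and extending by uniform continuity (this is exactly what Lemma \ref{Lemma21Urydiv2} gives), I obtain an isoversity between the completions of the ranges and then between $X$ and $Y$.

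The key inductive step is this: given $g_n\colon F_n\to F_n'$ a finite partial isoversity, I must extend it to include a prescribed new point, say $x\in X\setminus F_n$. Define $f\colon [F_n']^{<\omega}\to\R$ by
\[ f(A') = \delta_X\bigl(g_n^{-1}(A')\cup\{x\}\bigr).\]
I claim $f$ is admissible on $(F_n',\delta_Y\res F_n')$. Indeed, $f(\emptyset)=\delta_X(\{x\})=0$; the inequality $f(A')\geq \delta_Y(A')$ is just monotonicity combined with the fact that $g_n$ preserves $\delta$; condition (3) follows from (D2) applied on $X$ using that $C\cup\{x\}\neq\emptyset$; and condition (4) is connected sublinearity applied to the sets $g_n^{-1}(A)\cup\{x\}$ and $g_n^{-1}(B)\cup\{x\}$, which share $x$. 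Now, since $(Y,\delta_Y)$ has the extension property, there exists $y\in Y$ with $\delta_Y(A'\cup\{y\})=f(A')$ for every $A'\subset F_n'$. Setting $g_{n+1}=g_n\cup\{(x,y)\}$ gives the desired extension, and symmetrically I can extend in the other direction to catch $y_n$. Note that $y\neq$ any element of $F_n'$ provided $x\notin F_n$, since otherwise $\delta_Y(\{y,y'\})=0$ would force $\delta_X(\{x,g_n^{-1}(y')\})=0$, contradicting $x\notin F_n$.

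After $\omega$ many steps, $g=\bigcup g_n$ is a bijection between the dense sets $\bigcup F_n\supset\{x_n\}$ and $\bigcup F_n'\supset\{y_n\}$, preserving the diversity map on all finite subsets. By Lemma \ref{Lemma21Urydiv2}, $\delta^k$ is uniformly continuous on every $X^k$, and $g$ is an isometry for the induced metrics, hence uniformly continuous. Both $X$ and $Y$ are complete, so $g$ extends uniquely to a continuous bijection $\tilde g\colon X\to Y$, and $\delta_Y(\tilde g(A))=\delta_X(A)$ for every finite $A\subset X$ by continuity of $\delta^k$ in each argument. Thus $\tilde g$ is an isoversity.

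The only genuine obstacle is verifying admissibility of the map $f$ defined above; everything else is a routine back-and-forth plus continuous extension. Admissibility axioms (3) and (4) translate directly into axiom (D2) on $X$ once one notices that the set $\{x\}$ (or the intersection $\{x\}$) is what makes the relevant unions nonempty or overlapping. The second assertion of the theorem then follows because the Kat\v etov-tower construction of $\U$ produces a Polish diversity with the extension property (by Proposition \ref{actualext} applied to the union of the tower).
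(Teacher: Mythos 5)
Your argument is correct, and it is the standard back-and-forth proof that the cited source (Bryant--Nies--Tupper, Theorems 14 and 22 of \cite{Urydiv1}) gives; the present paper only quotes the result without reproving it, so there is nothing different to compare against. The one point worth tightening is your justification of admissibility condition (\textit{iii}): the relevant application of (D2) is to the sets $g_n^{-1}(A)\cup\{x\}$, $g_n^{-1}(C)$, $g_n^{-1}(B)$, and what makes it legitimate is that the \emph{middle} set $g_n^{-1}(C)$ is nonempty (because $C\neq\emptyset$ in that axiom), not that $C\cup\{x\}\neq\emptyset$ --- a phrasing slip rather than a gap.
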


With these preliminaries we move on to show that $\aut(\U)$ is a universal Polish group. The strategy to show this is the following: Any Polish group $G$ can be embedded into the automorphism group of a separable diversity $(X,\delta_X)$. Denote the diversity Kat\v etov tower on $X$ by $X_\omega$. Then $\aut(X)$ embeds into $\aut(X_\omega)$, which in turn embeds into $\aut(\U)$ because the completion of $X_\omega$ is isomorphic to $\U$. Moreover, these embeddings are all continuous with continuous inverses. Below we elaborate each of these steps. First, a lemma:

\begin{lemma}
Let $(X,\delta_X)$ be a separable diversity and let $X_1:=E(X,\omega)$ denote the diversity of admissible maps on $X$ with finite support. Then $\aut(X)$ embeds as a topological group into $\aut(X_1)$.
\end{lemma}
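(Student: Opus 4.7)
The plan is to assign to each $g \in \aut(X)$ the map $\tilde{g}\colon E(X,\omega)\to E(X,\omega)$ defined by $\tilde{g}(f)(A) := f(g^{-1}A)$ for finite $A\subset X$. The four admissibility clauses in Definition \ref{admis} refer only to $\delta_X$, which $g^{-1}$ preserves setwise, so $\tilde{g}(f)\in E(X)$. If $f\in E(X,\omega)$ is supported on a finite set $S$ via $h\in E(S)$, the change of variables $A_b\leftrightarrow g^{-1}(A_b)$ in the infimum defining the extension operator shows that $\tilde{g}(f) = (h\circ g^{-1})_{g(S)}^X$, where $h\circ g^{-1}\in E(g(S))$ is given by $C\mapsto h(g^{-1}C)$. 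Hence $\tilde{g}$ lands in $E(X,\omega)$. Evidently $\widetilde{gh}=\tilde{g}\tilde{h}$, so $g\mapsto \tilde{g}$ is a group homomorphism, and $\widetilde{g^{-1}}$ serves as an inverse to $\tilde{g}$, making it a bijection of $E(X,\omega)$.

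Next I would verify that $\tilde{g}$ preserves $\hat{\delta}$. In the defining supremum of $\hat{\delta}(\tilde{g}f_1,\ldots,\tilde{g}f_n)$, the substitution $B_i = g^{-1}A_i$ reparametrises the supremum over the same family of tuples of finite subsets of $X$, returning $\hat{\delta}(f_1,\ldots,f_n)$. Thus $\tilde{g}\in\aut(X_1)$. To see that $g\mapsto\tilde{g}$ is injective, compute directly
\[\tilde{g}(\kappa_x)(A) = \delta\bigl(g^{-1}A\cup\{x\}\bigr) = \delta\bigl(A\cup\{g(x)\}\bigr) = \kappa_{g(x)}(A),\]
so that $\tilde{g}$ being the identity forces $\kappa_{g(x)} = \kappa_x$ for every $x\in X$, hence $g=\mathrm{id}$ by injectivity of $x\mapsto\kappa_x$.

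The main work is continuity of $g\mapsto\tilde{g}$. Fix $f\in E(X,\omega)$ with support $S$ and representative $f_0\in E(S)$, and suppose $g_n\to g$ in $\aut(X)$. Writing
\[\tilde{g}f(A) = \inf\Big\{f_0(B) + \sum_{b\in B}\delta\bigl(A_b\cup\{g(b)\}\bigr) : B\subset S,\ \bigcup_{b\in B}A_b = A\Big\},\]
and applying the $1$-Lipschitz estimate from Lemma \ref{Lemma21Urydiv2} term by term to replace $g(b)$ by $g_n(b)$, one obtains the \emph{uniform} bound $|\tilde{g}f(A) - \tilde{g}_nf(A)| \leq \sum_{s\in S}d(g(s),g_n(s))$, so $\hat{\delta}(\tilde{g}_nf,\tilde{g}f)\to 0$. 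Continuity of the inverse map is immediate from the identity $\tilde{g}(\kappa_x)=\kappa_{g(x)}$: pointwise convergence of $\tilde{g}_n\to\tilde{g}$ evaluated at each $\kappa_x$ translates directly into $d(g_n(x),g(x))\to 0$ for every $x\in X$. The main obstacle I anticipate is precisely this continuity estimate, since $\hat{\delta}$ is a supremum over \emph{all} finite subsets of $X$ while only pointwise data on $X$ is available; what rescues the argument is that the infimum defining $\tilde{g}f$ only ever evaluates $g$ on the fixed finite support $S$, so a single finite set controls the whole estimate.
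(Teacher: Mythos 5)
Your proof is correct and follows essentially the same route as the paper: the paper defines $\Phi(g)(f_S^X)=f'^X_{g(S)}$ and then observes $\Phi(g)(f_S^X)(A)=f_S^X(g^{-1}A)$, which is exactly the map $\tilde g(f)=f\circ g^{-1}$ you take as the definition, and your injectivity and inverse-continuity arguments via $\tilde g(\kappa_x)=\kappa_{g(x)}$ match the paper's. Your explicit Lipschitz estimate over the finite support $S$ is precisely the ``direct argument'' for continuity that the paper mentions but leaves to the reader.
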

\begin{proof}
Let $\Phi\colon \aut(X)\to \aut(X_1)$ be the map defined by $\Phi(g)(f_S^X) = f'^X_{g(S)}$ where $f'(g(A))=f(A)$ for $A\subset S$. It is straightforward to check that $\Phi(g)$ is a bijection of $X_1$ extending $g$. Moreover we note that 
\begin{align}
\Phi(g)(f^X_S)(A)=f^X_S(g^{-1}A) \label{obs}
\end{align}
 for any finite $A\subset X$. Using this, it is straightforward to verify that $\Phi(g)$ is an automorphism of $X_1$ and that $\Phi$ is injective. Furthermore, continuity of $\Phi$ follows either from Pettis' theorem (cf.~\cite{Pettis50}) or simply by a direct argument using (\ref{obs}). Finally, continuity of the inverse of $\Phi$ can be seen as follows:

Suppose $\Phi(g_n)\to \Phi(g)$ and let $x\in X$ be given. We must show that $g_n(x)\to g(x)$. For this, let $\kappa_x\in X_1$ denote the image of $x$ under the embedding of $X$ into $X_1$. Then $\Phi(g_n)(\kappa_x)\to \Phi(g)(\kappa_x)$ which means that 
\[\sup_{B\text{ finite}} | \Phi(g_n)(\kappa_x)(B)-\Phi(g)(\kappa_x)(B)| \to 0.\] 
In particular this is true for $B=\{gx\}$. Therefore we have
\begin{align*}
|\Phi(g_n)(\kappa_x)(\{gx\}) - \Phi(g)(\kappa_x)(\{gx\})| &= |\delta(\{g_n^{-1}gx,x\}) - \delta(\{g^{-1}gx,x\})| \\
&= \delta(\{gx,g_nx\}) \to 0
\end{align*}
where we have used that $\Phi(h)(\kappa_x) = \kappa_{hx}$ for any $h\in \aut(X)$, which easily follows from (\ref{obs}) above. We conclude that $g_n\to g$ in $\aut(X)$.
\end{proof}

With this lemma we can now show that $\aut(\U)$ is a universal Polish group. 

\begin{theorem}\label{universal}
$\aut(\U)$ is a universal Polish group. 
\end{theorem}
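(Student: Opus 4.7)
The plan is to follow Uspenskij's strategy for the metric case, replacing Katětov maps by admissible maps throughout. The argument proceeds in four stages, as sketched in the paragraph preceding the lemma just proved.

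First, I would embed an arbitrary Polish group $G$ into the autoversity group of some separable diversity. Pick a compatible left-invariant metric $d$ on $G$ (which exists by Birkhoff--Kakutani) and define the diameter diversity $\delta(A) := \diam_d(A)$ on finite $A \subset G$. Axioms (D1), (D2') and (D2'') are routine, and the induced metric coincides with $d$, so $(G,\delta)$ is a separable diversity. Left translations $L_h \colon g \mapsto hg$ are $d$-isometries, hence preserve diameters, hence lie in $\aut(G,\delta)$, and $h \mapsto L_h$ is a topological group embedding: continuity is clear, and continuity of the inverse follows from evaluating $L_{h_n} \to L_h$ at the neutral element of $G$.

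Second, I would iterate the finitely supported Katětov construction: set $X_0 := (G,\delta)$ and $X_{n+1} := E(X_n,\omega)$, identifying each $X_n$ with its image in $X_{n+1}$ under $x \mapsto \kappa_x$. By the preceding lemma, the canonical map $\Phi \colon \aut(X_n) \into \aut(X_{n+1})$ is a topological group embedding, and the identity $\Phi(h)(\kappa_x) = \kappa_{hx}$ used in its proof shows that $\Phi(g)$ restricts to $g$ on $X_n$. Hence every $g \in \aut(X_0)$ assembles into an autoversity $\bar g$ of the separable diversity $X_\omega := \bigcup_n X_n$, and $g \mapsto \bar g$ is a topological group embedding, since pointwise convergence on $X_\omega$ reduces to pointwise convergence on each $X_n$, which by iteration of the lemma is equivalent to pointwise convergence on $X_0$.

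Third, $X_\omega$ enjoys the extension property (in particular, the approximate one): any finite $F \subset X_\omega$ lies in some $X_n$, and then for $f \in E(F)$ the finitely supported extension $f_F^{X_n}$ is a point of $X_{n+1} \subset X_\omega$ that realizes $f$ exactly via the $\kappa$-embedding. By Proposition \ref{actualext} the completion $\widehat{X_\omega}$ is a Polish diversity with the extension property, and Theorem \ref{extensioniso} then identifies $\widehat{X_\omega}$ with $\U$. By Lemma \ref{Lemma21Urydiv2} every autoversity of $X_\omega$ is uniformly continuous on the dense subdiversity, so it extends uniquely and continuously to an autoversity of $\widehat{X_\omega} \cong \U$; a standard $3\epsilon$-argument using the $1$-Lipschitz property of $\delta^n$ shows the resulting map $\aut(X_\omega) \into \aut(\U)$ is bicontinuous. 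Composing the three topological embeddings yields $G \into \aut(\U)$. I expect the main obstacle to be the bookkeeping in the second stage --- that the iterated extensions $\Phi^n(g)$ glue into a single autoversity of the direct-limit diversity $X_\omega$ and that the resulting embedding $\aut(X_0) \into \aut(X_\omega)$ is bicontinuous --- together with the verification in stage four that extension to the completion is continuous in both directions. Everything else is a direct transcription of Uspenskij's original argument into the diversity language.
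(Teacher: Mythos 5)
Your proposal is correct and follows essentially the same route as the paper: diameter diversity on a left-invariant metric, the Kat\v etov tower $X_\omega$ via the preceding lemma, and passage to the completion $\cong\U$ using Proposition \ref{actualext} and Theorem \ref{extensioniso}. The only (harmless) deviation is that you put the diameter diversity on $G$ itself rather than on its left completion $G_L$ as the paper does; since the tower construction only needs a separable diversity and the evaluation-at-identity argument still gives bicontinuity, this changes nothing.
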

\begin{proof}
First, any Polish group $G$ can be embedded into the isometry group of its left completion $(G_L,d_L)$ equipped with a left-invariant metric $d_L$ (cf.~\cite{roedie} for details on completions of Polish groups). We turn $G_L$ into a diversity by using the diameter diversity, denoted here by $\delta_L$, associated to $d_L$, i.e.~$\delta_L(A)$ is simply the diameter of $A$. Then $\aut(G_L,\delta_L)$ is still just $\iso(G_L,d_L)$ so $G$ embeds into this group. 
 
Given any separable diversity $X$ we let $X_1$ denote $E(X,\omega)$ and for any $n\in \N$ we let $X_n$ denote $E(X_{n-1},\omega)$. By $X_\omega$ we denote the union $\bigcup X_n$. In the lemma above we saw that $\aut(X_i)$ embeds into $\aut(X_{i+1})$ for every $i$. Hence we obtain a chain of embeddings 
\[\aut(X)\into \aut(X_{1})\into\aut(X_2)\into \ldots .\]
Moreover, it is easy to see that the resulting map $\aut(X)\to \aut(X_\omega)$ is an embedding as well. Finally, by \cite[Theorem 19]{Urydiv1} the completion of $X_\omega$ is isomorphic to $\U$. Therefore it follows from uniform continuity of $\delta$ (cf.~Lemma \ref{Lemma21Urydiv2}) that $\aut(X_{\omega})$ embeds into $\aut(\U)$. 

In conclusion, we have seen that given any Polish group $G$, we can embed $G$ into $\aut(G_L,\delta_L)$, which in turn may be embedded into $\aut(\U)$ using the construction above. Hence $\aut(\U)$ is a universal Polish group, which was what we wanted. 
\end{proof}


\section{Fraïssé theory}\label{fraisse theory}
In this section we briefly recall the Fraïssé theory that we will need to construct the rational Urysohn diversity as the Fraïssé limit of the class of all finite rational diversities. We will also define a useful \emph{free} amalgamation of diversities, that generalises the usual free amalgamation of metric spaces. 

First let us fix some notation. Given two structures $A$ and $B$ in some signature $\L$, we denote by $A\preceq B$ that $A$ \emph{embeds} into $B$, i.e.~that there is an injective map $f\colon A\to B$ that preserves the structure on $A$. Fraïssé classes for relational signatures are then defined as follows:

\begin{definition}\label{fraisse class}
Let $\L$ be a countable relational signature for a first-order language and let $\K$ be a class of finite $\L$-structures. Then $\K$ is a \textnormal{\textbf{Fraïssé class}} if it has the following properties:
\begin{enumerate}
\item (HP) $\K$ is \textnormal{\textbf{hereditary}}, i.e.~if $B\in \K$ and $A\preceq B$ then $A\in \K$.
\item (JEP) $\K$ has the \textnormal{\textbf{joint embedding property}}, i.e.~if $A,B\in \K$ then there is some $C\in \K$ such that $A,B\preceq C$. 
\item (AP) $\K$ has the \textnormal{\textbf{amalgamation property}}, i.e.~if $A,B,C\in \K$ and $f\colon A\to B$ and $g\colon A\to C$ are embeddings, then there is $D\in \K$ and embeddings $h_B\colon B\to D$ and $h_C\colon C\to D$ such that $h_B\circ f = h_C\circ g$. In diagram form:
\vspace{-5pt}
\[\begin{tikzpicture}
\node(a){}; 
\node(b)[right of = a,node distance = 2.5cm]{$\forall B$};
\node(c)[right of = b,node distance = 2.5cm]{};

\node(d)[below of = a, node distance=1cm]{$A$};
\node(e)[right of = d,node distance = 2.5cm, scale=1.5]{$\circlearrowleft$};
\node(f)[right of = e,node distance = 2.5cm]{$\exists D$};

\node(g)[below of = d,node distance = 1cm]{};
\node(h)[right of =g,node distance = 2.5cm]{$\forall C$};

\draw[thick,->](d) to node [above]{$\forall f$} (b);
\draw[thick,->] (d) to node [below]{$\forall g$} (h);
\draw[dashed,,thick,->] (b) to node [above]{$\exists h_B$} (f);
\draw[dashed,thick,->] (h) to node [below]{$\exists h_C$} (f);
\end{tikzpicture}
\vspace{-5pt}
\]
We call such a structure $D$ an \textnormal{\textbf{amalgam of $\boldsymbol{B}$ and $\boldsymbol{C}$ over $\boldsymbol{A}$}}.
\item $\K$ contains countably many structures (up to isomorphism), and contains structures of arbitrarily large (finite) cardinality. 
\end{enumerate}
\end{definition}
The main reason for studying Fraïssé classes is that any Fraïssé class $\K$ has a so-called \emph{Fraïssé limit} $\mathbb{K}$, which is \emph{universal} and \emph{ultrahomogeneous}. Universality in this case means that the class of all finite structures that embeds into $\mathbb{K}$ equals $\K$. This class is the so-called \textbf{age} of $\mathbb{K}$ and is denoted $\text{Age}(\mathbb{K})$. Ultrahomogeneity is defined as follows:

\begin{definition}
A structure $\A$ is \textnormal{\textbf{ultrahomogeneous}} if any isomorphism between finite substructures of $\A$ extends to an automorphism of $\A$. 
\end{definition}

Fraïssé's theorem then reads:

\begin{theorem}[{Fraïssé, \cite{Fraisse86, Fraisse54}, cf.~also \cite[Theorem 7.1.2]{Hodges}}]
Let $L$ be a countable relational signature and let $\K$ be a Fraïssé class of $L$-structures. Then there exists a unique (up to isomorphism) countable structure $\mathbb{K}$ satisfying:
\vspace{-5pt}
\begin{enumerate}[noitemsep]
\item $\mathbb{K}$ is ultrahomogeneous.
\item $\textnormal{Age}(\mathbb{K})=\K$.
\end{enumerate}
\end{theorem}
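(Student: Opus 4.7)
The plan is to prove existence by constructing $\mathbb{K}$ as the union of a carefully chosen countable chain of structures in $\K$, and to prove uniqueness by a back-and-forth argument. The bridge between construction and ultrahomogeneity is the following \emph{extension property}: a countable structure $\mathbb{K}$ with $\text{Age}(\mathbb{K}) \subseteq \K$ has the extension property if whenever $X \preceq \mathbb{K}$ is finite and $X \preceq C \in \K$, the inclusion $X \into \mathbb{K}$ extends to an embedding $C \into \mathbb{K}$. I would first observe that ultrahomogeneity together with $\text{Age}(\mathbb{K}) = \K$ implies the extension property: pick any embedding $g \colon C \into \mathbb{K}$ using the age condition, then use ultrahomogeneity to rotate $g(X)$ back onto $X$ via an automorphism $\sigma$, so that $\sigma \circ g$ does the job. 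Hence a countable structure with age $\K$ and the extension property is exactly what is needed on both the existence and the uniqueness side.

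For existence I would build a chain $A_0 \preceq A_1 \preceq \cdots$ with $A_n \in \K$ by dovetailing two bookkeeping tasks. Since $\K$ has countably many isomorphism types, enumerate them as $C_0, C_1, \ldots$, and at appropriate steps use (JEP) to enlarge the current $A_n$ so as to contain an isomorphic copy of $C_i$; this gives $\text{Age}(\bigcup_n A_n) \supseteq \K$, while (HP) gives the reverse inclusion. Simultaneously, enumerate all triples $(B, C, f)$ where $B \preceq C$ lie in $\K$ and $f \colon B \into A_n$ is an embedding into some stage already constructed, and at the step handling such a triple apply (AP) to $A_n$ and $C$ over $B$ (using $f$ and the inclusion $B \into C$) to obtain $A_{n+1} \in \K$. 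Standard diagonalization ensures every triple is eventually served. Setting $\mathbb{K} = \bigcup_n A_n$, the extension property is immediate: any finite $X \preceq \mathbb{K}$ lies in some $A_n$, so any prescribed $X \preceq C \in \K$ appears as a task which is processed at some later stage.

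Ultrahomogeneity of $\mathbb{K}$ is then proved by back-and-forth: given an isomorphism $\phi \colon X \to Y$ of finite substructures and enumerations of $\mathbb{K}$, extend $\phi$ one element at a time, alternating the domain and range side, each extension supplied by the extension property applied to $X \cup \{a\}$ or $Y \cup \{b\}$; relationality of $L$ is used here, since it guarantees that every finite subset carries a canonical substructure so that adjoining a single element causes no difficulty. For uniqueness, given two candidates $\mathbb{K}$ and $\mathbb{K}'$, both enjoy the extension property by the opening observation, and a back-and-forth between enumerations of $\mathbb{K}$ and $\mathbb{K}'$ produces an isomorphism. The main obstacle, in my view, is purely the bookkeeping in the existence step: new embeddings $f \colon B \into A_n$ appear only after $A_n$ is built, so the enumeration of tasks must be dynamic, and some care is needed to set up the diagonalization so that every task that ever arises is eventually processed.
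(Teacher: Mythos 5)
This is a classical theorem that the paper cites without proof (Fra\"iss\'e; Hodges, Theorem 7.1.2), and your argument is precisely the standard proof given in those references: build the limit as a union of a chain in $\K$ using JEP and AP with dovetailed bookkeeping, isolate the extension property, and run back-and-forth for both ultrahomogeneity and uniqueness. The proposal is correct, including the correctly flagged points that relationality is what makes one-point extensions of finite partial isomorphisms unproblematic and that the task enumeration must be dynamic.
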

The structure $\mathbb{K}$ in the theorem above is the \textbf{Fraïssé limit} of $\K$. Using this theorem we will show that there is a universal ultrahomogeneous countable rational diversity. First we need a couple of definitions and an amalgamation lemma to make it simpler for us to verify the AP for the class of finite rational diversities. 

\begin{definition} 
Let $Y$ be a set and let $X\subset Y$. A \textnormal{\textbf{connected cover}} of $X$ is a collection $\{E_i\}$ of subsets of $Y$ such that $X\subset\bigcup E_i$ and such that the \textnormal{\textbf{intersection graph}} $\mathcal{G}$ defined on $\{E_i\}$ by $E_i \mathcal{G} E_j \iff E_i\cap E_j \neq \emptyset$ is connected. 
\end{definition}
\begin{remark}
If $(Y,\delta)$ is a diversity and $X\subset Y$ is finite, then for any finite connected cover $\{E_i\}$ of $X$ with each $E_i$ finite, we have that $\delta(X)\leq \sum \delta(E_i)$. This inequality is the main reason why we are interested in connected covers. 
\end{remark}

With this terminology established we can define a \emph{free} amalgamation of two diversities sharing a common sub-diversity. This is a diversity version of the free amalgamation of metric spaces. 

\begin{definition}\label{deffreeamal}
Let $(A,\delta_A)$, $(B,\delta_B)$ and $(C,\delta_C)$ be non-empty finite diversities such that $A=B\cap C$ and such that $A$ is a subdiversity of $B$ and $C$. The \textnormal{\textbf{free amalgam of $\boldsymbol{B}$ and $\boldsymbol{C}$ over $\boldsymbol{A}$}} is the diversity $(D,\delta_D)$ where $D=B\cup C$ and where $\delta_D(X)$ is given by the minimum over all sums $\sum_i \delta(E_i)$ for $\{E_i: i \leq n\}$ a connected cover of $X$ such that for each $i$ either $E_i\subset B$ or $E_i\subset C$.
\end{definition}
\begin{remark}
If $X$ has elements from both $B$ and $C$, the definition of $\delta_D(X)$ above requires the connected cover to include elements from $A$. Hence, if we restrict $\delta_D$ to pairs we obtain the usual free amalgamation of metric spaces, i.e.
\[ \delta_D(b,c) =\min_{a\in A} \{ \delta_B(b,a)+\delta_D(a,c)\}\]
for $b\in B$ and $c\in C$. 
\end{remark}

Of course it is not necessarily evident that $\delta_D$ above defines a diversity map and that both $(B,\delta_B)$ and $(C,\delta_C)$ embeds into $(D,\delta_D)$. We proceed to verify this. 

\begin{lemma}\label{freeamal}
$\delta_D$ defined above is a diversity map on $B\cup C$ extending both $\delta_B$ and $\delta_C$. It follows that $(D,\delta_D)$ is an amalgam of $B$ and $C$ over $A$. 
\end{lemma}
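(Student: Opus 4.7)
The plan is to verify axioms (D1) and (D2) for $\delta_D$ directly from its definition, and then to establish the extension statement $\delta_D\big|_B=\delta_B$ (and symmetrically $\delta_D\big|_C=\delta_C$); the amalgam conclusion then follows at once, since the inclusions of $B$ and $C$ into $D$ become isoversities onto their images that agree on $A=B\cap C$.

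I will invoke Lemma 1.2 to split (D2) into monotonicity and connected sublinearity. Monotonicity is immediate: any connected cover of $Y$ whose parts lie in $B$ or $C$ also serves as a connected cover of any subset $X\subset Y$, since the definition of a connected cover only requires $X\subset \bigcup E_i$. For connected sublinearity, given $X_1\cap X_2\neq\emptyset$ and near-optimal covers $\mathcal{E},\mathcal{F}$ of $X_1,X_2$, any common element $z$ lies in some $E\in\mathcal{E}$ and some $F\in\mathcal{F}$ with $z\in E\cap F$; hence $\mathcal{E}\cup \mathcal{F}$ is a connected cover of $X_1\cup X_2$, and taking infima gives the desired inequality.

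For (D1), the empty cover handles $\delta_D(\emptyset)=0$ and the trivial cover $\{\{x\}\}$ handles singletons. Positivity when $|X|\geq 2$ reduces by monotonicity to the case of a pair. A cross pair $\{b,c\}$ with $b\in B\setminus A$, $c\in C\setminus A$ satisfies $\delta_D(b,c)=\min_{a\in A}(\delta_B(b,a)+\delta_C(a,c))>0$ by the remark following Definition \ref{deffreeamal}, using that $A\neq\emptyset$; for pairs entirely inside $B$ (or inside $C$), positivity will follow from the extension statement treated next.

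The main obstacle is the extension statement. The easy direction $\delta_D(X)\leq \delta_B(X)$ for $X\subset B$ uses the trivial cover $\{X\}$. For the reverse, given any connected cover $\{E_i\}$ of $X$ by $B$- and $C$-parts, my plan is to reduce it to a connected cover by $B$-parts alone without increasing the total weight. To do this, I partition the $C$-parts according to the connected components of the subgraph of the intersection graph they induce, and for each such component $\{E_{j_1},\ldots,E_{j_k}\}$ I replace its members by the single set $U:=\big(\bigcup_m E_{j_m}\big)\cap A\subset A\subset B$. The connected cover inequality applied inside the diversity $C$ yields $\delta_C(U)\leq \delta_C(\bigcup_m E_{j_m})\leq \sum_m \delta_C(E_{j_m})$, so the total weight does not increase. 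Coverage survives because any point of $X$ lying in some $E_{j_m}\subset C$ in fact lies in $X\cap B\cap C=X\cap A$, hence in $U$. The crucial point is that connectivity is preserved: every edge of the original intersection graph between a $B$-part and a $C$-part is realised in their common intersection, which lies in $B\cap C=A$, and therefore continues to link that $B$-part with $U$. The reduced family is thus a connected cover of $X$ by $B$-parts, and the connected cover inequality inside $B$ gives $\delta_B(X)\leq \sum_i \delta(E_i)$, completing the argument.
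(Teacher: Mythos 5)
Your proof is correct and follows the same overall architecture as the paper's: reduce (D2) to monotonicity plus connected sublinearity, verify those by manipulating connected covers, and prove the extension statement $\delta_D|_B=\delta_B$ by bounding $\delta_B(X)$ by the weight of an arbitrary cover. The one place where you genuinely diverge is the crucial inequality $\delta_B(X)\leq\delta_D(X)$ for $X\subset B$: the paper disposes of it with the bare assertion that in a connected cover of $X$ ``we can assume $E_i\subset B$ as well,'' whereas you actually justify that reduction, by contracting each connected component of $C$-parts to its trace on $A$ and checking that coverage, connectivity (via the observation that any $B$--$C$ edge of the intersection graph is realised inside $B\cap C=A$) and total weight all survive. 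This is the right argument -- note that the naive replacement of each $C$-part $E$ by $E\cap A$ individually would not work, since two $C$-parts can intersect outside $A$ and the edge between them would be lost -- so your write-up fills a real gap in the paper's proof. You also address the positivity half of (D1), which the paper skips entirely; your appeal to the pair formula from the remark is slightly circular since that remark is itself an unproved observation about $\delta_D$, but the point is easily secured directly: a connected cover of total weight $0$ consists of singletons and empty sets, and such a family is connected only if it reduces to a single singleton, which cannot cover two distinct points.
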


\begin{proof}
First we show $\delta_D$ agrees with $\delta_B$ and $\delta_C$ on $B$ and $C$, respectively. Suppose therefore $X\subset B$ (the other case is similar). Then $\{X\}$ is a connected cover of $X$ so $\delta_D(X) \leq \delta_B(X)$. To show equality, let $\{E_i\}$ be a connected cover of $X$. Then we can assume $E_i\subset B$ as well. By monotonicity of $\delta_B$ we have $\delta_B(X)\leq \delta_B(\bigcup E_i)$. By connectivity we have $\delta_B(\bigcup E_i)\leq \sum \delta_B(E_i)$. We conclude that $\delta_B(X)\leq \delta_D(X)$ as well, so in fact $\delta_D(X)=\delta_B(X)$. In particular $\delta_D(X)= 0 $ if $|X|\leq 1$. 

Next we show monotonicity. Let therefore $X\subset Y$ be given. Then any connected cover of $Y$ whose elements are contained in either $B$ or $C$ must also cover $X$. Hence $\delta_D(X)\leq \delta_D(Y)$. 

Lastly we show connected sublinearity. Suppose therefore that $X\cap Y\neq \emptyset$. Let $\{E_i\}$ and $\{F_j\}$ be connected covers realising $\delta_D(X)$ and $\delta_D(Y)$, respectively. Then, since $X$and $Y$ intersect, we have that $\{E_i,F_j\}$ is a connected cover of $X\cup Y$ whose elements are either contained in $B$ or $C$. Hence we must have 
\[\delta_D(X\cup Y)\leq \sum \delta(E_i) + \sum \delta(F_j) = \delta_D(X) + \delta_D(Y).\]
It follows that $\delta_D$ is a diversity map. 
\end{proof}

Observe that if the diversities $A$, $B$ and $C$ above are all rational, then the amalgam $D$ will also be a rational diversity. It follows that the class of finite rational diversities, denoted $\D$, has the AP and hence that this class is a Fraïssé class. 

\begin{prop}\label{existence and completion}
$\D$ is a Fraïssé class with limit $\U_{\Q}$. Moreover, the completion of $\U_{\Q}$ is (isomorphic to) the Urysohn diversity. 
\end{prop}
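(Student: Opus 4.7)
The plan is to check the four axioms of Definition \ref{fraisse class} for $\D$, apply Fra\"iss\'e's theorem, and then invoke Theorem \ref{extensioniso} and Proposition \ref{actualext} to identify the completion of the resulting limit with $\U$.

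For the Fra\"iss\'e conditions, I would encode finite rational diversities as structures in the countable relational signature consisting of symmetric $n$-ary relations $R_{n,q}$ for $n\geq 2$ and $q\in \Q_{\geq 0}$, where $R_{n,q}(x_1,\ldots,x_n)$ is interpreted as $\delta(\{x_1,\ldots,x_n\})=q$; substructure embeddings then coincide with diversity embeddings. HP is immediate because any subset of a finite rational diversity is itself one. AP is delivered directly by Lemma \ref{freeamal}: the free amalgam of two rational diversities over a common rational sub-diversity takes values that are minima of finite sums of rationals, hence remains rational. JEP follows from AP by amalgamating any two members over a common singleton. The signature is countable, there are countably many isomorphism types of each finite cardinality, and the examples $\delta(A)=|A|-1$ give members of $\D$ of arbitrary finite size. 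So $\D$ is a Fra\"iss\'e class, and Fra\"iss\'e's theorem produces the countable ultrahomogeneous diversity $\U_{\Q}$ with age $\D$.

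For the second assertion, $\U_{\Q}$ is separable (being countable), so its induced metric has a Polish completion; by uniform continuity of $\delta^n$ (Lemma \ref{Lemma21Urydiv2}), the diversity map extends uniquely and continuously to this completion, producing a Polish diversity. By Theorem \ref{extensioniso} it suffices to show this completion has the extension property, and by Proposition \ref{actualext} it is enough to verify that $\U_{\Q}$ has the \emph{approximate} extension property. So fix a finite $F\subset \U_{\Q}$, an admissible $f\in E(F)$, and $\epsilon>0$. Since $F$ inherits a rational diversity from $\U_{\Q}$, the admissibility constraints are finitely many linear inequalities in $\{f(A):A\subset F\}$ with rational coefficients, so $E(F)$ is a nonempty rational polyhedron in $\R^{2^F}$. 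As rational points are dense in any rational polyhedron, I can choose a rational-valued admissible $f'\in E(F)\cap \Q^{2^F}$ with $\max_{A\subset F}|f(A)-f'(A)|<\epsilon$. The set $F\cup\{y\}$ equipped with $\hat{\delta}(A\cup\{y\})=f'(A)$ is then a finite rational diversity extending $F$, hence belongs to $\D=\text{Age}(\U_{\Q})$. By ultrahomogeneity of $\U_{\Q}$, the inclusion $F\hookrightarrow \U_{\Q}$ extends to an embedding of $F\cup\{y\}$ into $\U_{\Q}$, producing $x\in \U_{\Q}$ with $\delta(A\cup\{x\})=f'(A)$ for every $A\subset F$; therefore $|\delta(A\cup\{x\})-f(A)|<\epsilon$, as required.

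The main obstacle is the rational approximation of an arbitrary admissible map by an admissible one with rational values. I would handle it via the rational polyhedron observation above; alternatively, by perturbing $f$ toward a strictly admissible witness (for instance the constant map $g(A)=M$ on $A\neq \emptyset$ for sufficiently large $M$) to obtain positive slack in every defining inequality and then rounding the finitely many values to nearby rationals. Everything else is a direct application of the Fra\"iss\'e machinery together with the extension-property characterization of $\U$ supplied by Theorem \ref{extensioniso}.
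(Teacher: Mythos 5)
Your argument follows the paper's proof essentially step for step: property (iv) and HP are immediate, AP comes from the free amalgam of Lemma \ref{freeamal} (which preserves rationality), and the completion is identified with $\U$ by verifying the approximate extension property via a rational-valued approximation of an admissible map and then invoking Proposition \ref{actualext} and Theorem \ref{extensioniso}. The only deviations are cosmetic: you derive JEP by amalgamating over a common singleton rather than via the paper's disjoint union with a large constant $N$ on mixed subsets, and you supply a justification (density of rational points in the rational polyhedron $E(F)$) for the rational approximation step that the paper merely asserts.
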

\begin{proof}
We first note that clearly there are rational diversities of arbitrarily large finite cardinality. Moreover, up to isomorphism, there are only countably many possible finite rational diversities. Hence $\D$ has property (\textit{iv}) of Definition \ref{fraisse class} above. We verify that $\D$ has the three other properties: HP, JEP and AP. 

HP is clearly satisfied and JEP is also easily seen to hold: If $A,B\in \D$ then we find some rational $N>\delta_A(A),\delta_B(B)$ and define $\delta$ on the disjoint union $A\sqcup B$ to be $\delta_A$ on $A$, $\delta_B$ on $B$, and if $X\subset A\sqcup B$ contains elements from both $A$ and $B$, then $\delta(X)=N$. It is easy to check that this defines a diversity map. Thus $A,B\preceq A\sqcup B$ and of course $(A\sqcup B,\delta)\in \D$.

Finally, AP follows from Lemma \ref{freeamal} above. To see this, suppose we are given $A,B,C\in \D$ with $A\preceq B, C$ via embeddings $f_B$ and $f_C$. Then we let $D=B\cup_A C$ be the union of $B$ and $C$ where we identify $f_B(A)$ with $f_C(A)$ while leaving $B\setminus f_B(A)$ and $C\setminus f_C(A)$ disjoint. Identifying $A$ with its image inside $D$ we now have that $A=B\cap C$. Therefore Definition \ref{deffreeamal} applies, and we obtain an amalgam $(D,\delta_D)$ of $B$ and $C$ over $A$. 

We conclude that $\D$ is a Fraïssé class and hence that it has a Fraïssé limit: $\U_{\Q}$. 

The "moreover" part follows since $\U_{\Q}$ has the approximate extension property: If $F\subset \U_{\Q}$ is finite, $f\in E(F)$ is admissible and $\epsilon>0$, we can find an admissible map $f'$ with rational values such that $|f'(A)-f(A)|<\epsilon$. Then $f'$ defines a rational diversity on $F\cup \{z\}$ for some new element $z$. By universality and ultrahomogeneity of $\U_{\Q}$ we find $x\in \U_{\Q}$ such that for all $A\subset F$ we have $|\delta(A\cup \{x\}) - f(A)| = |f'(A)-f(A)|<\epsilon$. It now follows from Proposition \ref{actualext} above that the completion of $\U_{\Q}$ has the extension property. Moreover, from Theorem \ref{extensioniso} it follows that this completion is isomorphic to $\U$ as claimed. 
\end{proof}


\section{A dense conjugacy class}
With the existence of $\U_{\Q}$ established, we set out to show that $\aut(\U_{\Q})$ and $\aut(\U)$ have a dense conjugacy class. First recall that the \emph{conjugacy action} of a group on itself is given by $g\cdot h := ghg^{-1}$. Having a dense conjugacy class is then defined as follows. 

\begin{definition}
A Polish group $G$ is said to have a \textnormal{\textbf{dense conjugacy class}} if there is some element of $G$ whose orbit under the conjugacy action of $G$ on itself is dense. 
\end{definition}

In \cite{KechRos2007} Kechris and Rosendal characterise when the automorphism group of a Fraïssé limit of a class $\K$ has a dense conjugacy class. They do this in terms of the JEP not for $\K$ itself, but for the class of all \emph{$\K$-systems}. Below, $A\subsetsim B$ denotes that $A$ is a substructure of $B$, i.e.~that $A\subset B$ and that the inclusion is an embedding of $A$ into $B$.

\begin{definition}
Let $\K$ be a Fraïssé class. A $\boldsymbol{\K}$\textnormal{\textbf{-system}} consists of a structure $A$ in $\K$ together with a substructure $A_0\subsetsim A$ and a partial automorphism $f\colon A_0\to A$. Such a system is denoted $\A=(A,(f,A_0))$. The class of all $\K$-systems is denoted $\K_p$. 

An \textnormal{\textbf{embedding}} of a $\K$-system $\A=(A,(f,A_0))$ into another $\K$-system $\B=(B,(g,B_0))$ is a map $\Phi \colon A\to B$ that embeds $A$ into $B$, $A_0$ into $B_0$ and $f(A_0)$ into $g(B_0)$ such that $\Phi \circ f \subset g\circ \Phi$. In diagram form:
\vspace{-3pt}
\[\begin{tikzpicture}
\node(a){$A_0$}; 
\node(b)[below of= a, node distance=1.5cm]{$f(A_0)$};

\node(c)[right of = a, node distance = 1.5cm]{};
\node(d)[below of = c, node distance=0.75cm, scale=1.5]{$\circlearrowleft$};

\node(e)[right of = a,node distance = 3cm]{$B_0$};
\node(f)[right of = b,node distance = 3cm]{$g(B_0)$}; 

\draw[thick,->](a) to node [left]{$f$} (b);
\draw[thick,->](b) to node [below]{$\Phi$} (f);
\draw[thick,->] (a) to node [above]{$\Phi$} (e);
\draw[thick,->] (e) to node [right]{$g$} (f);
\end{tikzpicture}\]
\end{definition}
\vspace{-5pt}
Kechris and Rosendal then obtain the following characterisation of having a dense conjugacy class. 
\begin{theorem}[{\cite[Theorem 2.1]{KechRos2007}}]
Let $\K$ be a Fraïssé class with limit $\mathbb{K}$. Then the following are equivalent:
\vspace{-5pt}
\begin{enumerate}[noitemsep]
\item There is a dense conjugacy class in $\aut(\mathbb{K})$.
\item $\K_p$ has the JEP. 
\end{enumerate}
\end{theorem}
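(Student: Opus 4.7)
The plan is to prove the two directions separately; both rely on universality and ultrahomogeneity of $\mathbb{K}$, with the non-trivial direction being $(ii)\Rightarrow(i)$, which I would establish by a back-and-forth construction.

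For $(i)\Rightarrow(ii)$, fix $g_0 \in \aut(\mathbb{K})$ with dense conjugacy class and take $\A_1,\A_2 \in \K_p$. The key observation is that a conjugate $h g_0 h^{-1}$ lying in the basic open neighborhood corresponding to a partial automorphism $\hat{f}$ of $\mathbb{K}$ is the same data as an embedding of the $\K$-system of $\hat{f}$ into the ``large system'' $(\mathbb{K},g_0)$ via $h^{-1}$. Using universality to embed each $A_i$ into $\mathbb{K}$ turns $f_i$ into a partial automorphism $\hat{f}_i$ of $\mathbb{K}$, and density of the conjugacy class of $g_0$ then provides, for each $i$, an embedding of $\A_i$ into $(\mathbb{K},g_0)$. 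Letting $B$ be the finite substructure of $\mathbb{K}$ generated by the two images, the restriction of $g_0$ to $A_1^0 \cup A_2^0$ (viewed inside $B$) is a partial automorphism, and the resulting $\K$-system in $\K_p$ jointly embeds $\A_1$ and $\A_2$.

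For $(ii)\Rightarrow(i)$, I would construct $g \in \aut(\mathbb{K})$ by back-and-forth so that every $\K$-system embeds into $(\mathbb{K},g)$; by the correspondence above, this property is equivalent to density of the conjugacy class of $g$. Fix enumerations $\A_0,\A_1,\ldots$ of the isomorphism types of $\K$-systems and $x_0,x_1,\ldots$ of the points of $\mathbb{K}$, and build an increasing chain of finite partial automorphisms $p_0 \subset p_1 \subset \cdots$ of $\mathbb{K}$. At stage $n$, view the current data as a $\K$-system $\B_n$ on the substructure of $\mathbb{K}$ generated by $\dom(p_n)\cup\ran(p_n)$, and apply JEP of $\K_p$ to obtain $\mathcal{C}_n \in \K_p$ jointly embedding $\B_n$ and $\A_n$. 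Since the underlying structure of $\mathcal{C}_n$ lies in $\K$ and already contains a copy of $B_n$, ultrahomogeneity of $\mathbb{K}$ lets me realize $\mathcal{C}_n$ concretely inside $\mathbb{K}$ as an extension of $\B_n$, yielding $p_n' \supset p_n$ that additionally embeds $\A_n$. Two further one-point ultrahomogeneity steps place $x_n$ into both the eventual domain and the eventual range, producing $p_{n+1}$. The union $g := \bigcup_n p_n$ is then a total automorphism of $\mathbb{K}$, and by construction every $\K$-system embeds into $(\mathbb{K},g)$.

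The main obstacle is the amalgamation step in the back-and-forth: JEP of $\K_p$ only delivers an abstract amalgam $\mathcal{C}_n$, whereas what is needed is an honest extension of the already-fixed partial automorphism $p_n$ sitting inside $\mathbb{K}$. Ultrahomogeneity of $\mathbb{K}$ is exactly the bridge that turns the abstract amalgam into a concrete extension, and it is also what converts the ``embeds into $(\mathbb{K},g)$'' property into the desired density of the conjugacy class.
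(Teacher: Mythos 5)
Your proposal is correct and follows essentially the same route as the source: the paper itself imports this statement from Kechris--Rosendal without proof, and your sketch reconstructs their standard argument, namely the correspondence between conjugates of $g_0$ meeting a basic open set and embeddings of the associated $\K$-system into $(\mathbb{K},g_0)$, together with a back-and-forth construction using the JEP of $\K_p$ and ultrahomogeneity. The only points worth making explicit are that the signature being relational guarantees the finite set $B$ you form in $(i)\Rightarrow(ii)$ is a substructure lying in $\K$, and that the one-point domain/range extensions in the back-and-forth are obtained by first extending $p_n$ to a full automorphism via ultrahomogeneity; both are routine.
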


As an immediate corollary to this, we obtain that $\aut(\U_{\Q})$ has a dense conjugacy class. 
\begin{cor}\label{conjugacyQ}
$\D_p$ has the JEP. Hence $\aut(\U_{\Q})$ has a dense conjugacy class. 
\end{cor}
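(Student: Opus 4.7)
The plan is to invoke the Kechris--Rosendal theorem stated just above, so it suffices to show that $\D_p$ has the JEP. Given two $\D$-systems $\A=(A,(f,A_0))$ and $\B=(B,(g,B_0))$, I will construct a joint embedding by a minor elaboration of the JEP argument for $\D$ itself given in the proof of Proposition \ref{existence and completion}: namely, amalgamate $A$ and $B$ by a large constant distance, and glue the two partial automorphisms together.

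Concretely, let $N \in \Q$ with $N > \max\{\delta_A(A),\delta_B(B)\}$, take $C = A \sqcup B$, and define $\delta_C$ by $\delta_C\res A = \delta_A$, $\delta_C\res B = \delta_B$, and $\delta_C(X)=N$ whenever $X$ contains elements from both $A$ and $B$. The argument in Proposition \ref{existence and completion} shows $(C,\delta_C)\in \D$ and that the inclusions $A,B \into C$ are embeddings of diversities. Now set $C_0 := A_0 \sqcup B_0 \subsetsim C$ and $h := f \cup g\colon C_0 \to C$, and let $\mathcal{C}:=(C,(h,C_0))$.

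The main thing to verify is that $h$ is a partial automorphism of $C$, i.e.\ that it preserves $\delta_C$ on every finite $X \subset C_0$. If $X \subset A_0$ or $X \subset B_0$ this is immediate since $f$ and $g$ were partial automorphisms to begin with. If $X = X_A \cup X_B$ with $X_A \subset A_0$ and $X_B \subset B_0$ both non-empty, then $\delta_C(X) = N$ by construction, and similarly $h(X) = f(X_A)\cup g(X_B)$ meets both $A$ and $B$ non-trivially, so $\delta_C(h(X)) = N$ as well. Hence $h$ is an embedding of $C_0$ into $C$, so $\mathcal{C} \in \D_p$.

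Finally, the inclusions $\Phi_A\colon A \into C$ and $\Phi_B\colon B \into C$ embed the systems $\A$ and $\B$ into $\mathcal{C}$: they send $A_0,B_0$ into $C_0$ and $f(A_0),g(B_0)$ into $h(C_0)$, and commute with the partial automorphisms since $h\res A_0 = f$ and $h\res B_0 = g$. This gives JEP for $\D_p$, and the Kechris--Rosendal theorem then yields a dense conjugacy class in $\aut(\U_\Q)$. No real obstacle is expected; the construction is essentially forced by the JEP proof for $\D$, and the only thing one must pay attention to is the case of subsets of $C_0$ that straddle both sides, which is handled automatically by the constant-$N$ definition.
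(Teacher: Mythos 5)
Your construction is exactly the one the paper uses: amalgamate $A$ and $B$ over the empty set with a constant rational value $N>\delta_A(A),\delta_B(B)$ on mixed subsets, take $C_0=A_0\sqcup B_0$ and $h=f\cup g$, and apply the Kechris--Rosendal criterion. The proposal is correct; you have merely written out the routine verification that the paper dismisses with ``it is easy to check''.
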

\begin{proof}
Let $\A=(A,(f,A_0))$ and $\B=(B,(g,B_0))$ be $\D$-systems. Then let $\mathcal{C}=(C, (h,C_0))$ be the system where $C=A\sqcup B$, $C_0=A_0\sqcup B_0$ and $h=f\cup g$ and where the diversity map $\delta_C$ is defined to be $\delta_A$ on $A$, $\delta_B$ on $B$ and on subsets with elements from both $A$ and $B$, $\delta_C$ is constant, equal to some rational $N>\delta_A(A), \delta_B(B)$. It is easy to check that $\mathcal{C}$ is in $\K_p$ and that both $\A$ and $\B$ embeds into $\mathcal{C}$. 
\end{proof}
We now wish to show the same thing for the automorphism group of the full Urysohn diversity. In order to do that, we will show that $\aut(\U_{\Q})$ embeds densely into $\aut(\U)$. This will follow from a homogeneity-like property that the rational and complete Urysohn diversities and metric spaces all share. In short, the property says that if two finite subspaces are close to being isomorphic, then we can find an isomorphic copy of one space close to the other space. In \cite{Zielinski18} the author refers to this property for metric spaces as \emph{pair propinquity}.  To emphasise that we are working with diversities we will call this property \emph{diversity propinquity}. It is defined as follows:

\begin{definition}
Let $(X,\delta_X)$ be a diversity and let $\a=(a_i)_{i\in I}$ and $\b=(b_i)_{i\in I}$ be two tuples of elements of $X$. For $\epsilon >0$ we say that $\a$ and $\b$ are $\boldsymbol{\epsilon}$\textnormal{\textbf{-isomorphic}} if we have 
\[ |\delta_X(\a_J) - \delta_X(\b_J)| <\epsilon \]
for all $J\subset I$ where $\b_J := (b_j)_{j\in J}$. 
\end{definition}
\begin{definition}\label{definition of propinquity}
Let $(X,\delta_X)$ be a diversity. We say that $(X,\delta_X)$ has \textnormal{\textbf{diversity propinquity}} if for all $\epsilon>0$ there is an $\epsilon'>0$ such that for all $\epsilon'$-isomorphic tuples $\a$ and $\b$ in $X$ there is some $\a'$ isomorphic to $\a$ and pointwise within $\epsilon$ of $\b$, i.e. $\max_i \delta_X(a_i',b_i)<\epsilon$. 
\end{definition}

We now have the following lemma, the proof of which is modelled on the proof of the corresponding fact for the Urysohn metric space in \cite[Lemma 6.5]{Rosendal09}.

\begin{lemma}\label{divpropinq}
The Urysohn diversity and the rational Urysohn diversity both have diversity propinquity. Moreover the $\epsilon'$ of the definition may simply be chosen to be the given $\epsilon$. 
\end{lemma}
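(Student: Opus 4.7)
Given $\epsilon$-isomorphic tuples $\a=(a_i)_{i\in I}$ and $\b=(b_i)_{i\in I}$ in the Urysohn diversity $\U$, my plan is to construct a diversity $\delta_Z$ on the abstract disjoint union $Z:=\{z_i:i\in I\}\sqcup\b$ that restricts to an isomorphic copy of $\a$ on the $z$-side, agrees with $\delta$ on the $\b$-side, and satisfies $\delta_Z(\{z_i,b_i\})<\epsilon$ for every $i$. Granting this, I enumerate the $z_i$ and add them to $\U$ one at a time: at each stage the remaining portion of $\delta_Z$ furnishes an admissible map on the current finite subset of $\U$, so the extension property (Proposition~\ref{actualext} together with Theorem~\ref{extensioniso}) supplies realising points $a'_i\in\U$. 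By construction the resulting tuple $\a'$ is isomorphic to $\a$ and lies pointwise within $\epsilon$ of $\b$, which is exactly diversity propinquity with $\epsilon'=\epsilon$.

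The construction of $\delta_Z$ mimics Definition~\ref{deffreeamal} with the addition of ``bridges'': for finite $S\subset Z$ I put
$$\delta_Z(S):=\inf_{\{E_k\}_k}\sum_k c(E_k),$$
the infimum being over connected covers of $S$ in $Z$ such that each $E_k$ is of one of three types --- (a) a subset of $\{z_i\}$, with cost $c(E_k):=\delta(\a_{J(E_k)})$; (b) a subset of $\b$, with cost $c(E_k):=\delta(E_k)$; or (c) a bridge $\{z_i,b_i\}$, with cost $c(E_k):=\epsilon/2$. Monotonicity and connected sublinearity of $\delta_Z$ then follow exactly as in Lemma~\ref{freeamal}, and the $\b$-side restriction $\delta_Z|_{\b}=\delta|_{\b}$ is immediate from the trivial single-set cover together with sublinearity inside $\b$.

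The crux is the $z$-side restriction $\delta_Z(\{z_i\}_{i\in J})=\delta(\a_J)$. The upper bound is the trivial cover, so the work is to show $\sum_k c(E_k)\geq\delta(\a_J)$ for an arbitrary connected cover $\{E_k\}$, which after discarding redundant sets I may assume to be cost-minimal. The key combinatorial observation is that every maximal connected family of pure-$\b$ sets in the cover (a \emph{$w$-component} $C_l$ with aggregated index set $J(C_l)\subset I$) must be incident to at least two bridges: a singleton-bridge incidence either allows the whole $w$-component to be deleted without disconnecting the cover or, by contracting to a single pair $\{b_i,b_j\}$ strung between two bridges, contradicts minimality. Connected sublinearity inside each $C_l$ then yields the block estimate
$$\sum_{W\in C_l}\delta(\b_{J(W)})+k_l\cdot\tfrac{\epsilon}{2}\ \geq\ \delta(\b_{J(C_l)})+\epsilon\ >\ \delta(\a_{J(C_l)})$$
where $k_l\geq 2$ and the last inequality is the $\epsilon$-isomorphism hypothesis. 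Each pure-$z$ piece $Z_m$ contributes $\delta(\a_{J(Z_m)})$; and since every bridge supplies a common index to its neighbouring $Z_m$ and $C_l$, the family $\{J(Z_m)\}_m\cup\{J(C_l)\}_l$ is a connected cover of $J$ in $I$, so connected sublinearity of $\delta\circ\a$ inside $\U$ gives $\sum_m\delta(\a_{J(Z_m)})+\sum_l\delta(\a_{J(C_l)})\geq\delta(\a_J)$, and altogether $\sum_k c(E_k)\geq\delta(\a_J)$.

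For $\U_{\Q}$ I replace the bridge cost $\epsilon/2$ by any rational $\eta$ with $\max_{J\subset I}|\delta(\a_J)-\delta(\b_J)|<2\eta<\epsilon$; such $\eta$ exists because $I$ is finite and the $\epsilon$-isomorphism inequality is strict. The resulting $\delta_Z$ takes only rational values, and the iterative extension step is carried out inside $\U_{\Q}$ by invoking its ultrahomogeneity (Proposition~\ref{existence and completion}) in place of the extension property. I expect the main obstacle to be a fully rigorous treatment of the two-bridge-per-$w$-component claim: characterising exactly when a bridge or $w$-component can be deleted or contracted in a minimal cover involves a careful case analysis of local modifications of the intersection graph, and the degenerate scenarios (singleton $J$ and isolated bridges whose $\b$-endpoint lies in no pure-$\b$ set) must be handled separately.
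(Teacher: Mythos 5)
Your proposal is correct and follows the paper's overall strategy---construct one diversity on the disjoint union of an abstract copy of $\a$ and the tuple $\b$, with cross-distances at most $\epsilon/2$ and the right restrictions, then realise it in $\U$ (resp.\ $\U_\Q$)---but the key construction is executed differently, and in a way that buys something real. The paper defines the joint assignment $\r$ on $\x\cup\y$ by minimising $\sum_i\r_1(E_i)+\sum_j\r_2(F'_j)+c/2$ over families whose \emph{projections to $\x$} are connected, i.e.\ connectivity is tested after identifying $y_i$ with $x_i$ and a single flat surcharge $c/2$ is added; it verifies monotonicity, sublinearity and $\r(x_i,y_i)=c/2$, but leaves the restriction identities $\r\res\x=\r_1$, $\r\res\y=\r_2$ (on which $d_1\leq d_\infty$ rests) unverified, and with the flat surcharge these are delicate. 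You instead charge $\epsilon/2$ \emph{per bridge} $\{z_i,b_i\}$ and test connectivity in the genuine disjoint union, which is exactly what makes the restrictions provable: your two-bridge count forces each pure-$\b$ block to pay at least $\epsilon>\max_J|\delta(\a_J)-\delta(\b_J)|$, so it can be traded for $\delta(\a_{J(C_l)})$ and connected sublinearity in the index set finishes the estimate. The flagged combinatorial lemma does go through as you sketch (bridges never meet each other, a block or bridge attached by a single incidence is a deletable pendant once $|S|\geq 2$, singletons cost nothing and never carry connectivity). One correction: the $\b$-side identity $\delta_Z\res\b=\delta\res\b$ is \emph{not} immediate, since a cover of a subset of $\b$ may detour through the $z$-side; it needs the same two-bridge counting with the roles of $\a$ and $\b$ exchanged. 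The realisation step (iterated extension property for $\U$, universality plus ultrahomogeneity for $\U_\Q$) is equivalent to the paper's.
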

\begin{proof}
The proof for the two diversities is the same. In the rational case all one needs to check is that the diversity maps defined below are rational, but since we are dealing with finite sets this is easily verified.

Let $n\in\N$ and let $\epsilon>0$. The first thing we need, is to introduce some notation for dealing with the various diversities one may assign to an $n$-tuple. Thus let $D_{\x}$ be the set of all \emph{diversity assignments} to the $n$-tuple $\x=(x_0,\ldots , x_{n-1})$. That is, if we denote $\{x_i : i\in I\}$ by $\x_I$, then $D_{\x}$ is the set of those maps on the power set of $\x$, $\r\colon \P(\x)\to \R$ (or into $\Q$ for the rational case), such that
\begin{enumerate}
\item $\r(\emptyset)=0$ and $\r(\x_I)=0$ if and only if $|I|\leq 1$,
\item For all $I_1,I_2$ and all $I\neq \emptyset$ we have $\r(\x_{I_1} \cup \x_{I_2}) \leq \r(\x_{I_1}\cup \x_{I})+\r(\x_{I}\cup \x_{I_2})$.
\end{enumerate}
Of course any $\r\in D_{\x}$ corresponds to an element of $\R^{2^n}$ that we will also denote by $\r$. Thus we will use the notation $\r(I)$ for $\r(\x_I)$ which will be convenient below. 

Let now $d_\infty$ denote the maximum metric on $D_{\x}$, i.e. 
\[d_\infty(\r,\r')=\sup_{I\subset n}\{|\r(I) - \r'(I)|\}.\]
Next we define another metric on $D_{\x}$ that measures how close together we can embed two diversities with $n$ elements into a third diversity. To define this metric, let $\y$ be another $n$-tuple of elements disjoint from $\x$. Then define $d_1$ to be the metric given by
\[ d_1(\r_1,\r_2) = \inf_{\r}\{\max_{i\leq n}\{\r(x_i,y_i)\} : \r\in D_{\x\cup \y}, \r\res \x = \r_1, \r\res \y =\r_2\}\]
where $\r_1,\r_2\in D_{\x}$ are two different diversity assignments. If $\r_1=\r_2$ we set $d_1(\r_1,\r_1)=0$. Of course here $\r\res \y = \r_2$ means that the diversity assignment on $\y$ given by $\r_2$ (i.e.~$\y_I\mapsto \r_2(\x_I)$) is equal to $\r\res \y$. That $d_1$ is in fact a metric follows from Lemma \ref{Lemma21Urydiv2}. 
We now claim that $d_1(\r_1,\r_2)\leq d_\infty(\r_1,\r_2)$. Moreover we claim that this will imply the lemma, but let's do one thing at a time. 

Let therefore $\r_1,\r_2\in D_{\x}$ be two different diversity assignments and set $c:=d_\infty(\r_1,\r_2)$. We need to define some $\r\in D_{\x\cup \y}$ such that $\r\res \x = \r_1$, $\r\res \y = \r_2$ and such that $\max \r(x_i,y_i) \leq c$. In order to define such an $\r$, we need to introduce some notation. Given a subset $s=\{y_{i_1},\ldots , y_{i_k}\}\subset \y$, we denote the corresponding set $\{x_{i_1},\ldots , x_{i_k}\}\subset \x$ by $s'$. A collection of subsets $\{E_i\}$ of $\x$ or $\y$ is said to be \emph{connected} if the intersection graph on $\{E_i\}$ forms a connected graph. Let now $\r$ be the diversity assignment where for each $s\subset \x\cup\y$, $\r(s)$ is defined to be the minimum over sums of the form $\sum_i \r_1(E_i) + \sum_j \r_2(F'_j)+c/2$ where 
\begin{itemize}[noitemsep]
\item $E_i\subset \x$, 
\item $F_j\subset \y$,
\item $\{E_i,F'_j\}$ is connected,
\item $s\cap \x\subset \bigcup E_i$, 
\item $s\cap \y \subset \bigcup F_j$.
\end{itemize}
Let us argue why $\r$ is a diversity assignment. If $s_1\subset s_2$ then any collection satisfying the properties of the minimum above for $s_2$ will also satisfy the properties for $s_1$. Hence $\r(s_1)\leq \r(s_2)$. If $s_1\cap s_2 \neq \emptyset$ we let $\{E^1_i\},\{F^1_j\}$ realise $\r(s_1)$ and $\{E^2_l\}, \{F^2_k\}$ realise $\r(s_2)$. Then it is easy to check that $\{E^1_i,E^2_l\},\{F^1_j,F^2_k\}$ satisfiy the properties of the minimum for $s_1\cup s_2$. Therefore $\r(s_1\cup s_2)\leq \r(s_1)+ \r(s_2)$ as required. We conclude that $\r$ is in fact a diversity assignment. Moreover, we see that $\sup_i\r(x_i,y_i) = c/2$ since the singletons $\{x_i\}$ and $\{y_i\}$ satisfy the properties of the minimum. This shows that $d_1(\r_1,\r_2)\leq c/2 <d_\infty(r_1,r_2)$ as we claimed. 

It now follows that both $\U$ and $\U_{\Q}$ have diversity propinquity. Since the argument for both diversities is the same, we only provide it for $\U$. Let $n\in\N$ and $\epsilon>0$ be given. Then we claim that $\epsilon$ works as the $\epsilon'$ of Definition \ref{definition of propinquity}. To see this, let $\a$ and $\b$ be $n$-tuples of elements of $\U$ and suppose $\sup_{I\subset n}|\delta(\a_I)-\delta(\b_I)| <\epsilon$. Let $\r_{\a}$ and $\r_{\b}$ be the diversity assignments corresponding to $\a$ and $\b$. Then $d_\infty(\r_{\a},\r_{\b})<\epsilon$ and so $d_1(\r_{\a},\r_{\b})<\epsilon$ as well. Therefore we find a diversity assignment $\r$ on $\a\cup\b$ such that restricted to $\a$ we get $\r_{\a}$ and restricted to $\b$ we get $\r_{\b}$ and such that $\sup_i \r(a_i,b_i) <\epsilon$. By universality of $\U$ we find $\a',\b'\in \U^n$ isomorphic as diversities to $\a$ and $\b$, respectively, such that $\sup \delta(a'_i,b'_i)<\epsilon$. By ultrahomogeneity we find an automorphism $g$ of $\U$ such that $\delta(a_i,g\cdot b_i)<\epsilon$ which was what we wanted. 
\end{proof}

We can now show that $\aut(\U_\Q)$ embeds densely into $\aut(\U)$.

\begin{theorem}\label{embedsdensely}
$\aut(\U_{\Q})$ continuously embeds into $\aut(\U)$ as a dense subgroup. 
\end{theorem}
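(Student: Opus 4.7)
The plan is to define the map $\Phi\colon \aut(\U_{\Q})\to\aut(\U)$ by sending each $g$ to its unique extension $\bar g$ to the completion $\U$, and to verify in turn that $\Phi$ is well-defined, a group homomorphism, injective, continuous, and has dense image. The first four items are routine; density is the main step and relies on Lemma~\ref{divpropinq}.

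For well-definedness, since $\U_{\Q}$ is dense in $\U$ (Proposition~\ref{existence and completion}) and every $g\in\aut(\U_{\Q})$ is an isometry of the induced metric, $g$ extends uniquely to an isometry $\bar g$ of $\U$. That $\bar g$ preserves the full diversity map (and not merely the induced metric) follows from Lemma~\ref{Lemma21Urydiv2}: for any finite $A\subset\U$, take a tuple $A'$ in $\U_{\Q}$ approximating $A$ pointwise, note $\delta(g(A'))=\delta(A')$, and pass to the limit by uniform continuity. Uniqueness of the extension makes $\Phi$ a homomorphism; $\ker\Phi$ is trivial because any automorphism of $\U$ fixing $\U_{\Q}$ pointwise is the identity. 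Continuity of $\Phi$ is a standard $\eta/3$-style argument: if $g_n\to g$ pointwise on $\U_{\Q}$ and $x\in\U$, pick $y\in\U_{\Q}$ close to $x$, use that $g_n(y)=g(y)$ eventually, and appeal to the isometry property of the extensions $\Phi(g_n)$ and $\Phi(g)$.

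For density, the goal is: given $f\in\aut(\U)$, a finite tuple $\a=(x_1,\ldots,x_n)\in\U^n$, and $\epsilon>0$, produce $g\in\aut(\U_{\Q})$ such that $\delta(\Phi(g)(x_i),f(x_i))<\epsilon$ for every $i$. The strategy is first to move the configuration $(\a,f(\a))$ into $\U_{\Q}$ using density, and then to convert the resulting approximate isomorphism into an exact one by invoking diversity propinquity of $\U_{\Q}$ together with ultrahomogeneity. Concretely, fix $\eta<\epsilon/(4n)$ and choose tuples $\a'=(a'_1,\ldots,a'_n)$ and $\b^0=(b^0_1,\ldots,b^0_n)$ in $\U_{\Q}^n$ with $\delta(a'_i,x_i)<\eta$ and $\delta(b^0_i,f(x_i))<\eta$. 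Since $f$ is an autoversity, $\a$ and $f(\a)$ have identical diversity values on every sub-tuple, so Lemma~\ref{Lemma21Urydiv2} gives that $\a'$ and $\b^0$ are $2n\eta$-isomorphic in $\U_{\Q}$.

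Applying Lemma~\ref{divpropinq} to $\U_{\Q}$ with target precision $\epsilon/2$ (for which the lemma permits $\epsilon'=\epsilon/2$) and using $2n\eta<\epsilon/2$, one obtains $\b^1\in\U_{\Q}^n$ which is isomorphic as a rational diversity to $\a'$ and satisfies $\max_i\delta(b^1_i,b^0_i)<\epsilon/2$. Ultrahomogeneity of $\U_{\Q}$ then supplies $g\in\aut(\U_{\Q})$ with $g(a'_i)=b^1_i$, and since $\Phi(g)$ is an isometry of $\U$ the triangle inequality yields
\[ \delta(\Phi(g)(x_i),f(x_i))\leq\delta(x_i,a'_i)+\delta(b^1_i,b^0_i)+\delta(b^0_i,f(x_i))<\eta+\epsilon/2+\eta<\epsilon. \]
The hard step is producing the exact isomorphism in $\U_{\Q}$ close to the approximate target $\b^0$; this is precisely what diversity propinquity of $\U_{\Q}$ was set up to deliver, while everything else is bookkeeping using the Lipschitz estimate of Lemma~\ref{Lemma21Urydiv2}.
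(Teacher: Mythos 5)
Your proposal is correct and follows essentially the same route as the paper: extend each rational autoversity to $\U$ via the uniform continuity of Lemma~\ref{Lemma21Urydiv2}, and obtain density by approximating the tuple and its image in $\U_{\Q}$, observing the two rational tuples are approximately isomorphic, and then invoking Lemma~\ref{divpropinq} together with ultrahomogeneity before a final triangle-inequality estimate. The only (immaterial) differences are that you verify continuity by a direct $\eta/3$ argument where the paper cites Pettis' theorem, and you approximate $f(x_i)$ directly where the paper approximates the images $g(x_i)$ of the already-chosen rational points; your $\epsilon$-bookkeeping checks out.
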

\begin{proof}
Recall that $\U_{\Q}$ is dense in $\U$ by Proposition \ref{existence and completion}. Furthermore, since the diversity map defines uniformly continuous maps on finite powers of $\U$ (cf.~Lemma \ref{Lemma21Urydiv2} above), it follows that any $g\in \aut(\U_{\Q})$ uniquely extends to an autoversity of $\U$. Thus $\aut(\U_{\Q})$ embeds into $\aut(\U)$. Moreover, this embedding must be continuous by Pettis' theorem (cf.~\cite{Pettis50}). 

We move on to show that $\aut(\U_\Q)$ is dense in $\aut(\U)$. Recall that the topology on $\aut(\U)$ is the pointwise convergence topology generated at the identity by sets of the form
\[ U_{\a,r} := \{ g\in \aut(\U) : \delta(g(\a),\a)<r\}\]
for a tuple $\a=(a_1,\ldots ,a_n)$ of elements of $\U$ and some $r>0$. In each of these sets we must find an autoversity extending a rational autoversity. Let therefore $U_{\a,r}$ be given and let $g\in U_{\a,r}$. Set $\epsilon:= r-\max_i \delta(g(a_i),a_i)>0$ and find a tuple $\x$ of $n$ elements of $\U_\Q$ with $\max_i\delta(a_i,x_i)<\epsilon/4$. Let moreover $\y$ be an $n$-tuple of elements of $\U_\Q$ such that ${\max_i\delta(y_i,g(x_i))<\epsilon/(4n)}$. Note that $g(x_i)$ is not necessarily in $\U_\Q$ - hence this approximation. By Lemma \ref{Lemma21Urydiv2} it follows that $(\y,\delta)$  is $\epsilon/4$-isomorphic 
to $(\x,\delta)$ and therefore, by propinquity and ultrahomogeneity of $\U_\Q$, we find an autoversity $g_0$ of $\U_\Q$ such that $\max_i \delta(g_0(x_i),y_i)<\epsilon/4$. We claim that the extension of $g_0$ to $\U$ is in $U_{\a,r}$. Let therefore $\tilde{g}_0$ denote this extension. We have
\begin{alignat*}{2}
\delta(a_i,\tilde{g}_0(a_i)) &\leq{} \delta(a_i,g(a_i)) + \delta(g(a_i),g(x_i)) + \delta(g(x_i),y_i) +\delta(y_i,\tilde{g}_0(x_i))\\
&\phantom{\leq}{}\,+\delta(\tilde{g}_0(x_i),\tilde{g}_0(a_i))\\
&<{} r-\epsilon + \epsilon/4 + \epsilon/(4n) + \epsilon/4 + \epsilon/4 
\\ &\leq{} r.
\end{alignat*}
We conclude that $\tilde{g}_0\in U_{\a,r}$ and hence that $\aut(\U_\Q)$ is a dense subgroup of $\aut(\U)$. 
\end{proof}
As an immediate corollary we obtain that $\aut(\U)$ has a dense conjugacy class. 

\begin{cor}\label{conjugacy}
$\aut(\U)$ has a dense conjugacy class. 
\end{cor}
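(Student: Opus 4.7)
The plan is to derive this corollary directly by combining Theorem~\ref{embedsdensely} with Corollary~\ref{conjugacyQ}. Let $\iota \colon \aut(\U_\Q) \to \aut(\U)$ denote the continuous dense embedding provided by Theorem~\ref{embedsdensely}, and by Corollary~\ref{conjugacyQ} pick some $h \in \aut(\U_\Q)$ whose conjugacy class $C = \{ghg^{-1} : g \in \aut(\U_\Q)\}$ is dense in $\aut(\U_\Q)$. Since $\iota$ is a group homomorphism, $\iota(C) = \{\iota(g)\iota(h)\iota(g)^{-1} : g \in \aut(\U_\Q)\}$ lies inside the $\aut(\U)$-conjugacy class of $\iota(h)$, so it suffices to show that $\iota(C)$ itself is dense in $\aut(\U)$.

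To verify the latter, I would take an arbitrary non-empty open $V \subset \aut(\U)$. By density of $\iota(\aut(\U_\Q))$ in $\aut(\U)$, there is some $g_0 \in \aut(\U_\Q)$ with $\iota(g_0) \in V$. By continuity of $\iota$, the preimage $\iota^{-1}(V)$ is open in $\aut(\U_\Q)$ and contains $g_0$, hence is non-empty. Density of $C$ in $\aut(\U_\Q)$ then yields some $c \in C \cap \iota^{-1}(V)$, whereby $\iota(c) \in \iota(C) \cap V$, as required.

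Since the corollary falls out of stringing together the two previous results by elementary topology, there is no real obstacle here: no additional property of $\U$ or its automorphism group needs to be invoked. The only very mild subtlety worth noting is that mere continuity of $\iota$ already suffices to transfer density from $C$ to $\iota(C)$; we do not need $\iota$ to be a topological embedding in the strict sense, nor do we need the conjugacy action to be continuous in any special way.
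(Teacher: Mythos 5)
Your argument is correct and is exactly the paper's argument: the paper's proof of this corollary is the one-line observation that a dense conjugacy class of $\aut(\U_\Q)$ pushes forward under the continuous dense embedding of Theorem~\ref{embedsdensely} to a subset of a conjugacy class of $\aut(\U)$ that is dense. Your write-up simply supplies the routine topological details that the paper leaves implicit.
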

\begin{proof}
This follows easily since $\aut(\U_{\Q})$ has a dense conjugacy class and is densely embedded into $\aut(\U)$. 
\end{proof}

\section{Ample generics}
We move on to our next endeavour: Ample generics of $\aut(\U_\Q)$. Let us begin by defining this notion. 

\begin{definition}
A Polish group $G$ has \textnormal{\textbf{ample generics}} if for each $n\in \N$ there is a comeagre orbit for the diagonal conjugacy action of $G$ on $G^n$ defined by
\[ g\cdot (g_1,\ldots ,g_n) = (gg_1g^{-1},\ldots , gg_ng^{-1}).\]
\end{definition}
Ample generics turns out to be a very powerful property with many interesting consequences. Before explaining some of these consequences, we mention a few examples of groups that are known to have ample generics.

\begin{examples} The following groups have ample generics.
\begin{itemize}[noitemsep]
\item The automorphism group of the random graph, \cite{Hrush92}, cf.~also \cite{HHLS}.
\item The free group on countably many generators, \cite{BryantEvans97}.
\item The group of measure preserving homeomorphisms of the Cantor space, \cite{KechRos2007}.
\item The automorphism group of $\N^{<\omega}$ seen as the infinitely splitting regular rooted tree, \cite{KechRos2007}. 
\item The isometry group of the rational Urysohn metric space, \cite{Solecki2005}. 
\end{itemize}
\end{examples}
In \cite{KechRos2007}, where these examples are taken from, Kechris and Rosendal show, as mentioned, a number of powerful consequences of ample generics. We have collected the most important ones in the theorem below. 

\begin{theorem}
Let $G$ be a Polish group with ample generics. Then $G$ has the following properties:
\vspace{-3pt}
\begin{itemize}[noitemsep]
    \item[(1)] \textnormal{\textbf{Automatic continuity property}}, i.e.~any homomorphism from $G$ to a separable group $H$ is continuous. 
    \item[(2)] \textnormal{\textbf{Small index property}}, i.e.~any subgroup of $G$ of index $<2^{\aleph_0}$ is open. 
\item[(3)] $G$ cannot be the union of countably many non-open subgroups. 
\end{itemize}
\end{theorem}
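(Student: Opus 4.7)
The plan is to reduce all three consequences to a single Baire-category principle, the so-called \emph{Steinhaus property}, and then to derive (1), (2), (3) as short covering arguments. These are classical consequences of ample generics due to Kechris and Rosendal, following in spirit Hodges--Hodkinson--Lascar--Shelah; the point of ample generics is to supply enough comeager conjugacy orbits in $G^n$ to transport Baire category across a continuous orbit map, thereby sidestepping measurability issues.

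The central lemma to establish is that $G$ is a \emph{Steinhaus group}: for every symmetric $W \subset G$ with $e \in W$ such that $G = \bigcup_m g_m W$ for some countable family $(g_m)$, there is a fixed $k$ such that $W^k$ contains an open neighborhood of the identity. To prove this I would pick a large $n$ and a tuple $\vec{h} \in G^n$ whose diagonal conjugacy orbit $O_{\vec{h}}$ is comeager, as supplied by ample generics. The orbit map $\pi \colon G \to G^n$, $\pi(g) = g \vec{h} g^{-1}$, is continuous with image $O_{\vec{h}}$, and using $G = \bigcup_m g_m W$ we get the countable decomposition $O_{\vec{h}} = \bigcup_m g_m \pi(W) g_m^{-1}$. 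Since $O_{\vec{h}}$ is non-meager in the Polish space $G^n$, Baire category forces some $g_m \pi(W) g_m^{-1}$ to be non-meager; conjugation by $g_m^{-1}$ is a homeomorphism, so $\pi(W)$ itself is non-meager in $G^n$. Combining this with a Pettis-type argument applied inside $G^n$ (on the comeager orbit, where one has enough Baire-measurable scaffolding from $\pi(W \cdot W)$ etc.), one extracts that $\pi(W^k)$ contains an open neighborhood of $\vec{h}$ in $O_{\vec{h}}$, which pulls back through $\pi$ to show that $W^k$ contains a neighborhood of $e$ in $G$.

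From the Steinhaus property the three conclusions follow quickly. For (1), given $\phi \colon G \to H$ with $H$ separable and any open symmetric $V \ni e_H$, pick $U \ni e_H$ symmetric with $U^k \subset V$, cover $H = \bigcup_m h_m U$ by separability, choose lifts $g_m \in \phi^{-1}(h_m)$, and set $W = \phi^{-1}(U)$, giving $G = \bigcup_m g_m W$; Steinhaus then provides $W^k$ as a neighborhood of $e$, and $\phi(W^k) \subset U^k \subset V$ gives continuity at $e$. For (3), if $G = \bigcup_n H_n$ with the $H_n$ subgroups, Baire category forces some $H_n$ to be non-meager in $G$; one then applies the Steinhaus property with $W = H_n$ (invoking some bookkeeping to produce the required countable cover from the non-meagerness together with ample generics), and uses $H_n^k = H_n$ to deduce that $H_n$ contains a neighborhood of $e$, hence is open. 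For (2), any subgroup $H \leq G$ with $[G:H] < 2^{\aleph_0}$ can be shown, by a separate Baire argument on the coset action that again uses the comeager diagonal orbit, to have countable index, whereupon (3) applies.

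The main obstacle is the Steinhaus lemma itself: the sets $W$ and $H_n$ in question are not assumed to be Baire-measurable, so the classical Pettis theorem does not apply directly to $W W^{-1}$ or to $H_n$. The role of ample generics is precisely to circumvent this by performing Baire category inside $G^n$, where the comeager diagonal conjugacy orbit provides a canonical Baire-measurable ``anchor,'' and then transporting the conclusion back to $G$ through the continuous orbit map $\pi$. Everything else (the deductions of (1), (2), (3), and the minor step from index $<2^{\aleph_0}$ to countable index) is a routine covering argument once the Steinhaus property is in place.
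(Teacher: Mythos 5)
A preliminary remark on scope: the paper offers no proof of this theorem at all --- it is stated as a quotation of results of Kechris and Rosendal \cite{KechRos2007} --- so your attempt can only be measured against their original arguments. Your overall architecture is the standard one for the automatic continuity part: reduce to a Steinhaus-type property for countably syndetic symmetric sets, and establish that property by pushing the countable cover forward along the orbit map $\pi(g)=g\vec{h}g^{-1}$ onto a comeager diagonal conjugacy orbit. Your first step there is correct ($O_{\vec{h}}=\bigcup_m g_m\pi(W)g_m^{-1}$ is non-meager, hence $\pi(W)$ is non-meager in $G^n$), and the deduction of (1) from the Steinhaus property is a genuinely routine covering argument.

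However, the places where you wave your hands are exactly the places where all the difficulty lives, and as written they do not go through. For the Steinhaus lemma itself, ``a Pettis-type argument inside $G^n$'' is not available: $\pi(W)$ is non-meager but has no reason to have the Baire property, so one cannot conclude that any product set built from it has nonempty interior; and even granting that $\pi(W^j)$ contains a relatively open neighbourhood $U\cap O_{\vec{h}}$ of $\vec{h}$, the pullback only yields $\pi^{-1}(U)\subset W^j\cdot C_G(\vec{h})$, so you must in addition trap the centraliser of the generic tuple inside a bounded power of $W$. Handling these two issues is precisely the content of Kechris and Rosendal's several-page argument, in which the generic tuple is chosen in terms of $W$ itself. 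For (3), the non-meager subgroup $H_{n_0}$ produced by Baire category need not be countably syndetic (Polish groups admit non-meager subgroups of index $2^{\aleph_0}$), so there is no countable cover to feed into the Steinhaus property, and the ``bookkeeping'' you defer is in fact the whole problem. For (2), the step from index $<2^{\aleph_0}$ to countable index is the entire content of the small index property, not a routine coset computation; and the subsequent appeal to (3) is a non sequitur, since a countable-index subgroup covers $G$ by cosets, which are not subgroups (at that point one should instead apply Steinhaus directly to $H=H^k$). In \cite{KechRos2007}, properties (2) and (3) are not derived from the Steinhaus property at all, but from a separate Cantor-scheme (tree) construction that uses generic tuples to manufacture $2^{\aleph_0}$ distinct cosets of any non-open subgroup; some such independent argument is needed to complete your proof.
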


Another important result from \cite{KechRos2007} is a characterisation of when the automorphism group of a Fraïssé limit has ample generics in terms of the JEP and a weak form of the AP. This weaker form of amalgamation is, naturally enough, called the \emph{weak amalgamation property} (or \emph{WAP} for short) and is defined as follows:

\begin{definition}
Let $\K$ be a class of finite structures. Then $\K$ has the \textnormal{\textbf{weak amalgamation property}} (WAP) if for any $A_0\in \K$ there is $A\in \K$ and an embedding $f_0\colon A_0\to A$ such that whenever $g_B\colon A\to B$ and $g_C\colon A\to C$ are embeddings into $B,C\in \K$, there is $D\in \K$ and embeddings $h_B\colon B\to D$ and $h_C\colon C\to D$ such that $h_B\circ g_B\circ f_0 = h_C\circ g_C \circ f_0$. In diagram form:
\[\begin{tikzpicture}
\node(a){}; 
\node(b)[right of = a,node distance = 2.5cm]{$\forall B$};
\node(c)[right of = b,node distance = 2.5cm]{};

\node(d)[below of = a, node distance=1cm]{$\exists A$};
\node(e)[right of = d,node distance = 2.5cm, scale=1.5]{$\circlearrowleft$};
\node(f)[right of = e,node distance = 2.5cm]{$\exists D$}; 

\node(g)[below of = d,node distance = 1cm]{};
\node(h)[right of = g,node distance = 2.5cm]{$\forall C$};

\node(i)[left of = d, node distance=2cm]{$A_0$};

\draw[thick,dashed,->](i) to node [above]{$\exists f_0$} (d);
\draw[thick,->](d) to node [above]{$\forall g_B$} (b);
\draw[thick,->] (d) to node [below]{$\forall g_C$} (h);
\draw[dashed,,thick,->] (b) to node [above]{$\exists h_B$} (f);
\draw[dashed,thick,->] (h) to node [below]{$\exists h_C$} (f);
\end{tikzpicture}\]
\end{definition}
However, it is not the Fraïssé class itself that must have the WAP and the JEP in order for the automorphism group to have ample generics, but the class of so-called \emph{$n$-systems} for $n\in \N$. This is the class of finite structures $A$, together with $n$ substructures of $A$ and $n$ partial automorphisms of $A$ defined on these substructures. The exact definition is as follows:
\begin{definition}
Let $\K$ be a Fraïssé class and let $n\geq 1$ be given. An \textnormal{\textbf{$\boldsymbol{n}$-system in $\boldsymbol{\K}$}} consists of a structure $A$ in $\K$ together with $n$ substructures $A_1,\ldots , A_n\subsetsim A$ and $n$ partial automorphisms $f_1\colon A_1 \to A, \ldots , f_n\colon A_n\to A$. We denote such a system by $\A=(A,(f_i,A_i)_{i\leq n})$. The class of all $n$-systems in $\K$ is denoted $\K_p^n$. 

An \textnormal{\textbf{embedding}} of an $n$-system $\A=(A,(f_i,A_i))$ into another $n$-system $\B=(B,(g_i,B_i))$ is a map $\Phi\colon A\to B$ that embeds $A$ into $B$, $A_i$ into $B_i$ and $f_i(A_i)$ into $g_i(B_i)$ and such that $\Phi \circ f_i \subset g_i \circ \Phi$ for each $i\leq n$. In diagram form, for each $i\leq n$:
\vspace{-10pt}
\[\begin{tikzpicture}
\node(a){$A_i$}; 
\node(b)[below of= a, node distance=1.5cm]{$f_i(A_i)$};

\node(c)[right of = a, node distance = 1.5cm]{};
\node(d)[below of = c, node distance=0.75cm, scale=1.5]{$\circlearrowleft$};

\node(e)[right of = a,node distance = 3cm]{$B_i$};
\node(f)[right of = b,node distance = 3cm]{$g_i(B_i)$}; 

\draw[thick,->](a) to node [left]{$f_i$} (b);
\draw[thick,->](b) to node [below]{$\Phi$} (f);
\draw[thick,->] (a) to node [above]{$\Phi$} (e);
\draw[thick,->] (e) to node [right]{$g_i$} (f);
\end{tikzpicture}\]
\end{definition}
\vspace{-7pt}
Note that since we have defined embeddings between $n$-systems, we can talk about the WAP and the JEP for the class $\K_p^n$. In \cite{KechRos2007} Kechris and Rosendal show that these two properties for $\K_p^n$ actually  characterise ample generics. 
\begin{theorem}[{\cite[Theorem 6.2]{KechRos2007}}]\label{ampleiffwapjep}
Let $\K$ be a Fraïssé class and let $\mathbb{K}$ denote its limit. Then the following are equivalent:
\vspace{-3pt}
\begin{enumerate}[noitemsep]
\item $\aut(\mathbb{K})$ has ample generics.
\item For all $n\geq 1$, $\K_n^p$ has the JEP and the WAP. 
\end{enumerate}
\end{theorem}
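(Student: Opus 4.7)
The plan is to translate the topological condition of having a comeager diagonal conjugacy orbit in $G^n$, where $G = \aut(\mathbb{K})$, into combinatorial conditions on $\K_p^n$ via a Baire-category/back-and-forth argument. The fundamental observation is that a basic neighborhood of a tuple $\bar{g}=(g_1, \ldots, g_n) \in G^n$ is coded by an $n$-system: a finite $A \subset \mathbb{K}$ with substructures $A_i$ on which $g_i$ restricts to a partial automorphism $f_i = g_i\res A_i$ determines the basic open set
\[ U_{\A} = \{(h_1,\ldots,h_n) \in G^n : h_i \supseteq f_i \text{ and } h_i^{-1} \supseteq f_i^{-1}\},\]
and by ultrahomogeneity of $\mathbb{K}$ these sets form a neighborhood basis at $\bar{g}$.

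For the direction (\textit{ii})$\Rightarrow$(\textit{i}), I would use JEP and WAP to produce a comeager orbit. JEP for $\K_p^n$ easily translates to the existence of a \emph{dense} orbit: given basic open sets coded by $n$-systems $\A$ and $\B$, a joint embedding into some $\mathcal{C}\in \K_p^n$ can be realized inside $\mathbb{K}$ by ultrahomogeneity, producing a tuple in $G^n$ whose finite approximation witnesses membership in both neighborhoods up to conjugation. The more delicate step uses WAP to establish \emph{generic ergodicity}: enumerate a countable dense family $(\A_k)_{k\in \N}$ of $n$-systems, and for each $k$ apply WAP to fix a system $\A_k' \succeq \A_k$ such that every pair of extensions of $\A_k'$ can be amalgamated. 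Define $C_k \subset G^n$ as the set of tuples $\bar{g}$ whose finite approximations, whenever they embed $\A_k$, already factor through an embedded copy of $\A_k'$. Each $C_k$ is dense open, so $\bigcap_k C_k$ is dense $G_\delta$, and a back-and-forth construction — alternately amalgamating $\A_k'$-extensions of two tuples in $\bigcap_k C_k$ while exhausting $\mathbb{K}$ — shows that any two such tuples lie in the same diagonal conjugacy orbit. Hence this orbit is comeager.

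For (\textit{i})$\Rightarrow$(\textit{ii}), fix a comeager orbit $\mathcal{O} \subset G^n$. JEP for $\K_p^n$ follows by selecting, for $\A, \B \in \K_p^n$, tuples $\bar{g}, \bar{h} \in \mathcal{O}$ realizing $U_{\A}$ and $U_{\B}$ respectively; since $\bar{g}$ and $\bar{h}$ are conjugate in $G$, a sufficiently large finite substructure carrying both the image of $\A$ under the conjugator and the system $\B$ yields a common extension. For WAP, given $\A_0\in \K_p^n$, pick $\bar{g}\in \mathcal{O}\cap U_{\A_0}$ and let $\A \succeq \A_0$ be a slightly larger finite approximation to $\bar{g}$. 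Any two extensions $\B, \mathcal{C}$ of $\A$ determine open neighborhoods both meeting $\mathcal{O}$, and the conjugator between the corresponding representatives in $\mathcal{O}$, restricted to a large enough finite substructure, provides the amalgamating diagram.

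The main technical obstacle is the WAP-to-generic-ergodicity step: one must carefully organize the countably many density conditions $C_k$ and verify that the back-and-forth used to conjugate two tuples in $\bigcap_k C_k$ can be carried out while simultaneously extending both tuples by WAP-coherent amalgamations and exhausting $\mathbb{K}$. The bookkeeping here is the heart of the argument and is precisely where the combinatorial strength of WAP — as opposed to full amalgamation — is genuinely used, since the chosen system $\A_k'$ must be fixed in advance of knowing which extensions will arise.
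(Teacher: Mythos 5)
First, a point of order: the paper does not prove this theorem --- it is imported verbatim as \cite[Theorem 6.2]{KechRos2007} and used as a black box in the proof of Theorem \ref{ample}. So there is no in-paper argument to compare yours against; what follows measures your sketch against the actual Kechris--Rosendal (and Ivanov--Truss) proof, whose overall strategy --- coding basic open subsets of $\aut(\mathbb{K})^n$ by $n$-systems, JEP corresponding to topological transitivity, WAP to a localized transitivity condition, and a back-and-forth to conjugate two ``generic'' tuples --- your outline does correctly reproduce.

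That said, two steps are genuinely gapped as written. (1) In (\textit{ii})$\Rightarrow$(\textit{i}), the sets $C_k$ are $G_\delta$ rather than open (harmless), but more importantly membership in $\bigcap_k C_k$ alone does not let the back-and-forth run: to carry out the ``forth'' step you must also know that each tuple \emph{realizes inside} $\mathbb{K}$ every one-step extension, through the designated amalgamation base $\A_k'$, of each of its finite approximations. These realization conditions form a further countable family of comeager sets that must be intersected in; without them the amalgam handed to you by WAP has nowhere to land in $\mathbb{K}$, and two tuples of $\bigcap_k C_k$ need not be conjugate. (2) In (\textit{i})$\Rightarrow$(\textit{ii}), the WAP half hinges on the conjugator $k$ between the chosen representatives of $U_{\B}\cap\mathcal{O}$ and $U_{\mathcal{C}}\cap\mathcal{O}$ inducing embeddings that \emph{commute over} $\A_0$, i.e.\ on $k$ fixing $A_0$ pointwise (up to the two inclusions). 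Being conjugate in $G$ gives no control over where $k$ sends $A_0$, and ``a slightly larger finite approximation $\A$'' does not identify the property of $\A$ that repairs this. Kechris and Rosendal extract it from non-meagerness of the orbit by a Pettis/Effros-type localization, choosing $\A$ so that the conjugator may be taken in the pointwise stabilizer of $A_0$; this is exactly where ``weak'' enters, since an argument valid for arbitrary $\A\succeq\A_0$ would establish the full amalgamation property for $\K_p^n$, a strictly stronger requirement in general.
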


Using this theorem, we can show that $\aut(\U_{\Q})$ has ample generics. The proof uses the following extension result inspired by Solecki's \cite[Theorem 2.1]{Solecki2005}.

\begin{theorem}\label{finite extension}
Let $(A,\delta_A)$ be a finite diversity. Then there is a finite diversity $(B,\delta_B)$ containing $A$ as a subdiversity and such that any partial isoversity of $A$ extends to a full autoversity of $B$.
\end{theorem}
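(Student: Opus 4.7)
The plan is to adapt Solecki's construction \cite{Solecki2005} from the metric setting to diversities, using the free amalgamation of Lemma \ref{freeamal} in place of the free metric amalgamation. First enumerate the partial isoversities of $A$ as $p_1, \ldots, p_k$ with $p_i \colon A_i^0 \to A_i^1$, and build $B$ iteratively in $k$ stages, extending one $p_i$ at a time to a full autoversity.

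At a given stage I have a finite diversity $A'$ and a partial isoversity $p \colon A_0 \to A_1$ of $A'$ to promote to a full autoversity $\tilde p$ of a finite extension $A''$. The directed graph of $p$ on $A'$ decomposes into isolated vertices (fixed by $\tilde p$), cycles (on which $p$ is already bijective), and finite paths $a_0 \to a_1 \to \cdots \to a_m$ with $a_0 \notin A_1$ and $a_m \notin A_0$. For each such path I adjoin fresh elements $b_1, \ldots, b_\ell$, closing the path into a cycle by setting $\tilde p(a_m) = b_1$, $\tilde p(b_j) = b_{j+1}$, and $\tilde p(b_\ell) = a_0$. The diversity on $A''$ is then defined as a free amalgam (Definition \ref{deffreeamal}) of cyclically rotated copies of $A'$, indexed by the powers of $\tilde p$, so that $\tilde p$ acts by permuting the copies and is therefore an autoversity by construction. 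To guarantee that the autoversities $\tilde p_1, \ldots, \tilde p_{i-1}$ constructed at earlier stages survive to stage $i$, I carry out the stage-$i$ extension equivariantly with respect to the finite group $\Gamma_{i-1}$ they generate on $A^{(i-1)}$: new elements are adjoined in complete $\Gamma_{i-1}$-orbits, and the amalgam is defined symmetrically under both $\tilde p_i$ and $\Gamma_{i-1}$. The resulting $A^{(i)}$ is finite since $\Gamma_{i-1}$ acts faithfully on a finite set, and setting $B = A^{(k)}$ completes the construction.

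The main obstacle is the diversity bookkeeping. One must verify that the amalgam is a genuine diversity, that it extends $\delta_{A'}$ so that $A' \preceq A''$, that $\tilde p$ preserves it, and that the autoversities from earlier stages survive into $A''$. The axioms (D1), (D2'), (D2'') follow by iterated application of Lemma \ref{freeamal}; the correct restriction to $\delta_{A'}$ is secured by taking the amalgamation along the natural common sub-diversities corresponding to the identifications $a_i \leftrightarrow \tilde p^{i}(a_0)$; and the autoversity property of $\tilde p$ is automatic from the cyclic symmetry of the construction. The equivariant refinement required to preserve prior autoversities is the most delicate ingredient, but it reduces to the same amalgamation argument carried out over $\Gamma_{i-1}$-orbits rather than singletons.
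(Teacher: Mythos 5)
Your approach diverges fundamentally from the paper's, which makes no attempt at an iterative, one-isoversity-at-a-time construction. The paper encodes the finite diversity $(A,\delta_A)$ as a relational structure, isolates a finite family $\T$ of forbidden configurations (connected covers whose total diversity undercuts the declared diversity of a covered set), proves that every diversity viewed as such a structure is $\T$-free, and then invokes Herwig and Lascar's extension theorem for $\T$-free structures \cite[Theorem 3.2]{HerLas2000} to extend \emph{all} partial automorphisms simultaneously; a diversity on the relevant finite piece $B$ of the resulting structure is then recovered as a minimum over connections, with $\T$-freeness guaranteeing that the values on $A$ are not shrunk. The reliance on Herwig--Lascar is not a convenience: it is what controls the group generated by the extensions, and that is precisely the point your proposal elides.

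Concretely, the gap is in the inductive step. To keep $\tilde p_1,\dots,\tilde p_{i-1}$ alive as autoversities of $A^{(i)}$ you must extend each of them from $A^{(i-1)}$ to $A^{(i)}$ --- which is itself an instance of the simultaneous-extension problem you are trying to solve, not a side condition. The claim that $A^{(i)}$ stays finite because ``$\Gamma_{i-1}$ acts faithfully on a finite set'' is a non sequitur: the relevant group is $\langle \Gamma_{i-1},\tilde p_i\rangle$ acting on the newly adjoined points. Closing the $p_i$-paths inside complete $\Gamma_{i-1}$-orbits forces $\tilde p_i$ to move the $\Gamma_{i-1}$-translates of the new points, which demands further new points, and so on, with no termination argument; generically $\langle \Gamma_{i-1},\tilde p_i\rangle$ is infinite (free-product behaviour), which is exactly the classical obstruction that makes Hrushovski-type theorems nontrivial. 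Even the single-stage bookkeeping is unverified: Definition \ref{deffreeamal} and Lemma \ref{freeamal} amalgamate \emph{two} diversities over their intersection, whereas you amalgamate a whole cycle of rotated copies of $A'$ over overlapping identifications $a_j \leftrightarrow \tilde p^{\,j}(a_0)$; such a cyclic amalgam can create connected covers that travel around the cycle and strictly decrease $\delta$ on $A'$, so the required embedding $A'\preceq A''$ fails without an additional argument --- this collapse is precisely the phenomenon the paper's $\T$-freeness condition is designed to exclude. As written the proposal is not a proof; to repair it you would need either a genuinely equivariant extension theorem (which free amalgamation alone does not supply) or to follow the paper and import Herwig--Lascar.
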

\begin{proof}
We can without loss of generality assume that $|A|\geq 2$. Let $D$ be the set 
\[D:= \{ (\delta_A(X),|X|) : X \subset A \} \setminus \{(0,1),(0,0)\}.\]
That is, $D$ is all pairs of the non-zero values of $\delta_A$ together with the size of the set the value comes from. For each $(r,n)\in D$ we let $R_{(r,n)}$ be an $n$-ary relation symbol and let $\L$ be the (finite) relational language consisting of these symbols. We call a tuple of elements of $D$, $\alpha = ((r_0,n_0),\ldots , (r_k,n_k))$, a \emph{configuration} if we have that 
\[ \sum_{i=1}^{k} r_i < r_0\quad \text{ and }\quad 1+\sum_{i=1}^k (n_i-1) \geq n_0. \]
Given a configuration $\alpha =((r_i,n_i))$ let $Y_0, Y_1,\ldots , Y_k$ be sets such that
\begin{enumerate}[noitemsep]
\item $Y_0 \subset \bigcup_{i=1}^k Y_i$,
\item $|Y_i|=n_i$,
\item The intersection graph on $\{ Y_1,\ldots , Y_k\}$ is connected.
\end{enumerate}
We call such a family of sets $\{Y_i : 0\leq i\leq k\}$ an \emph{$\alpha$-family}. Note that since $\alpha$ is a configuration it is always possible to find at least one $\alpha$-family. Moreover, we note that there are only finitely many $\alpha$-families. 

Given a configuration $\alpha = ((r_i,n_i) : 0\leq i\leq k)$ and an $\alpha$-family $\beta= \{Y_i\}$ we define an $\L$-structure $\M_{\alpha,\beta}$ with universe $\bigcup Y_i$ by declaring that the only relations satisfied by $\M_{\alpha,\beta}$ are the following:
\begin{align*}
\M_{\alpha,\beta}\models R_{(r_i,n_i)}(\sigma(Y_i))
\end{align*}
for any permutation $\sigma$ of the elements of $Y_i$ (considered here as an ordered tuple and not just a set). The permutations merely ensure that the relations are symmetric and do not really serve any other purpose. Let $\T$ denote the family of all $\M_{\alpha,\beta}$ for all configurations $\alpha$ and all $\alpha$-families $\beta$. Note that $\T$ is finite. 

Any diversity $(X,\delta_X)$ is naturally also an $\L$-structure by letting
\[ X\models R_{(r,n)}(Y) \biimp \delta_X(Y) = r \text{ and } |Y|=n \]
for any finite subset/tuple $Y$ of elements of $X$, meaning that we are considering $Y$ as a subset on the right-hand side and as an ordered tuple on the left-hand side. Note, however, that the order we choose on $Y$ is not important. Observe that any partial autoversity of $X$ is also a partial \emph{automorphism} of $X$ as an $\L$-structure. 

Given a configuration $\alpha=((r_i,n_i): 0\leq i\leq k )$ and an $\alpha$-family $\beta=\{Y_0,\ldots , Y_k\}$ we now claim that there are no \emph{weak homomorphisms} $h\colon \M_{\alpha,\beta}\to X$, i.e.~there is no map $h$ such that if we have $\M_{\alpha,\beta} \models R_{(r,n)}(Y)$ then $X\models R_{(r,n)}(h(Y))$. To see this, suppose $h$ was such a map. Then since $\beta$ is an $\alpha$-family we would have that $h(Y_0) \subset \bigcup_{i=1}^k h(Y_i)$. By the monotonicity of the diversity map this would imply $\delta_X(h(Y_0))\leq \delta_X(\bigcup_{i=1}^k h(Y_i))$. Since the intersection graph on $\{Y_i:1\leq i\leq k\}$ is connected, it would follow that the intersection graph on the images $\{ h(Y_i) : 1\leq i\leq k\}$ was connected too. Therefore we could find $Y_{i_0}$ such that the intersection graph on the family $\{ h(Y_i) : i\neq i_0\}$ remained connected (this is always possible for finite connected graphs). Hence, by the triangle inequality for the diversity map, we would have that
\[\delta_X(\bigcup_{i=1}^k h(Y_i)) \leq \delta_X(h(Y_{i_0})) + \delta_X(\bigcup_{i\neq i_0} h(Y_i)).\] 
By induction this would imply that
\[ \delta_X(\bigcup_{i=1}^k h(Y_i)) \leq \sum_{i=1}^k \delta_X(h(Y_i)).\]
However, since $h$ was a weak homomorphism and $\beta$ is an $\alpha$-family, we would have that $X\models R_{(r_i,n_i)}(h(Y_i))$ and hence $\delta_X(h(Y_i))=r_i$ for each $i$. Thus we would have that
\[ r_0 = \delta_X(h(Y_0)) \leq \sum_{i=1}^k \delta_X(h(Y_i)) = \sum_{i=1}^k r_i < r_0,\]
which is, of course, a contradiction. 

An $\L$-structure with this property, i.e.~such that there are no weak homomorphisms from any $\M_{\alpha,\beta}$ into it, is said to be \emph{$\T$-free}. 

Next, let $(\U,\delta_{\U})$ denote the Urysohn diversity. By universality we can embed $(A,\delta_A)$ into $(\U,\delta_\U)$ and by ultrahomogeneity we can extend each partial isoversity of $A$ to an autoversity of $\U$. Note that since $D$ includes all values of $\delta_A$, any partial $\L$-automorphism of $A$ is a partial isoversity of $A$ (and vice versa of course). Hence we can extend any partial $\L$-automorphism of $A$ to a full $\L$-automorphism of $\U$ viewed as an $\L$-structure. By Herwig and Lascar's \cite[Theorem 3.2]{HerLas2000} we can find a finite $\T$-free $\L$-structure $C$ containing $A$ as a substructure such that each partial $\L$-automorphism of $A$ extends to a full one of $C$. Given a partial automorphism of $A$, $g$, we will denote its extension to $C$ by $\tilde{g}$. By convention, we will assume that the empty map is extended to the identity map. 

A sequence of subsets $e_1,\ldots , e_k\subset C$ is called a \emph{connection} if the intersection graph on $\{e_i\}$ is connected and if there are $(r_1,n_1),\ldots , (r_k,n_k)\in D$ such that for each $i$
\begin{enumerate}[noitemsep]
\item $|e_i|=n_i$,
\item $C\models R_{(r_i,n_i)}(\sigma(e_i))$ for any permutation $\sigma$ of $e_i$ considered as an ordered tuple.  
\end{enumerate}
Given $c,c'\in C$ we say that they are \emph{connected} if there is a connection $e_1 ,\ldots , e_k$ such that $c\in e_1$ and $c'\in e_k$. Let $B\subset C$ be those $b\in C$ that are connected to some $a\in A$. Note that any $b\in B$ is connected to all $a\in A$ since if $b$ is connected to $a'\in A$ via the connection $e_1, \ldots , e_k$, then $\{a,a'\},e_1,\ldots ,e_k$ is a connection between $a$ and $b$. Moreover, clearly $A\subset B$ since given $a\in A$ we pick $a'\in A\setminus\{a\}$ (remember that we have assumed $|A|\geq 2$) and see that $\{a,a'\}$ is a connection between $a$ and $a'$. 

Given a partial automorphism of $A$, $g$, we claim that $\tilde{g}(B)=B$. To show this, it is enough to show that $\tilde{g}(B)\subset B$ since we are dealing with finite sets. If $g$ is the empty map, then we extend it to the identity and there is nothing to show. If not, pick $a$ in the domain of $g$ and let $b\in B$. Then, as noted above, we can find a connection between $a$ and $b$. Let $e_1,\ldots ,e_k$ denote such a connection. Since $\tilde{g}$ is an automorphism it follows that $\tilde{g}(e_1),\ldots ,\tilde{g}(e_k)$ is a connection between $\tilde{g}(a)=g(a)\in A$ and $\tilde{g}(b)$, because clearly the intersection graph on $\{\tilde{g}(e_i) : 1\leq i\leq k\}$ is connected and $\tilde{g}$ preserves the relations. We conclude that $\tilde{g}(b)\in B$ as we claimed. 

Define now a diversity $\delta_B$ on $B$ by letting $\delta_B(X)$ be $0$ if $|X|\leq 1$ and otherwise letting it be the minimum over all sums $\sum_{i=1}^k r_i$ where for some connection $e_1,\ldots , e_k$ with $e_i\subset B$ we have $C\models R_{(r_i,|e_i|)}(\sigma(e_i))$ for any permutation of $e_i$ considered as a tuple and where $X\subset \bigcup e_i$. Note that since $X\subset B$ each element of $X$ is connected to the \emph{same} element of $a\in A$. Hence the collection of all these connections, one for each $x\in X$, forms a connection containing $X$. Therefore this minimum is not taken over the empty set and hence $\delta_B$ is well-defined. 

We must argue why $\delta_B$ is a diversity map, i.e.~we must show that for each $X,Y,Z\subset B$ with $Z\neq \emptyset$ we have
\[\delta_B(X\cup Y)\leq \delta_B(X\cup Z) + \delta_B( Z\cup Y). \]
Let $\{e_i\}$ and $\{r_i\}$ realise $\delta_B(X\cup Z)$ and let $\{f_j\}$ and $\{s_j\}$ realise $\delta_B(Z\cup Y)$. Then since $Z\subset X\cup Z\subset \bigcup e_i$ and $Z\subset Z\cup Y\subset \bigcup f_j$ it follows that the intersection graph on $\{e_i\}\cup\{f_j\}$ is connected. Hence $\{e_i\}\cup\{f_j\}$ forms a connection. Moreover, this connection covers $X\cup Y$. Therefore we have that $\delta_B(X\cup Y)\leq \sum r_i + \sum s_i = \delta_B(X\cup Z) + \delta_B(Z\cup Y)$ as we wanted. Moreover, if $g$ is a partial isoversity of $A$ it follows that the extension $\tilde{g}$ and its inverse $\tilde{g}^{-1}$ maps connections to connections. Therefore we must have that $\tilde{g}\colon B\to B$ is an autoversity with respect to $\delta_B$. 

Finally, we must show that $\delta_B$ extends $\delta_A$. First of all it is clear that we must have $\delta_B(X)\leq \delta_A(X)$ for all $X\subset A$ since $\{X\}$ is itself a connection covering $X$ as $C\models R_{(\delta_A(X),|X|)}(X)$. Suppose next that we have $\delta_B(X) <\delta_A(X)$. Then let $e_1,\ldots ,e_k$ be a connection with corresponding values $r_1,\ldots ,r_k$ witnessing this, i.e.~$\sum r_i <\delta_A(X)$. It follows that  
\[(\delta_A(X),|X|),(r_1,|e_1|),\ldots ,(r_k,|e_k|)\] is a configuration because $X\subset \bigcup e_i$ so 
\vspace{-5pt}
\begin{align*}
|X|&\leq |e_1|+|e_2\setminus e_1| + \ldots +|e_k\setminus (\bigcup_{i=1}^{k-1} e_i)|\\
&\leq 1+\sum (|e_i|-1),
\end{align*}
where the second inequality follows since the first sum counts each element of $\bigcup e_i$ exactly once and the second sum counts each element at least once, given that the intersection graph on $\{e_i\}$ is connected. If we denote this configuration by $\alpha$ then $\{e_i,X : 1\leq i\leq k\}$ is an $\alpha$-family, $\beta$. Therefore $\M_{\alpha,\beta}$ is in $\T$ and the identity map on $\M_{\alpha,\beta}$ is a weak homomorphism into $C$. This contradicts that $C$ is $\T$-free.  We conclude that $\delta_B(X)=\delta_A(X)$.

All in all we have extended each partial isoversity of $A$ to an autoversity of $(B,\delta_B)$, and this diversity contains $(A,\delta_A)$ as a subdiversity. This was what we wanted. 
\end{proof}

We are now ready to prove that $\aut(\U_{\Q})$ has ample generics. 

\begin{theorem}\label{ample}
$\aut(\U_{\Q})$ has ample generics. 
\end{theorem}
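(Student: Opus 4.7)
By Theorem \ref{ampleiffwapjep} it suffices to show that for every $n \geq 1$ the class $\D_p^n$ of $n$-systems in $\D$ has both the JEP and the WAP. The JEP is a direct extension of the argument in Corollary \ref{conjugacyQ}: given $n$-systems $\A = (A,(f_i,A_i)_{i\leq n})$ and $\B = (B,(g_i,B_i)_{i\leq n})$, I would form $C := A\sqcup B$ with diversity $\delta_C$ equal to $\delta_A$ on $A$, $\delta_B$ on $B$, and a common rational constant $N > \delta_A(A), \delta_B(B)$ on any subset meeting both sides. Setting $h_i := f_i \cup g_i$ with domain $A_i\sqcup B_i$ yields an $n$-system in $\D_p^n$ into which both $\A$ and $\B$ embed, since each $h_i$ preserves $\delta_C$ on same-side subsets and preserves the constant $N$ on mixed subsets.

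For the WAP the key tool is Theorem \ref{finite extension}. Given an $n$-system $\A_0 = (A_0,(f_i^0,A_i^0)_{i\leq n})$, I would apply Theorem \ref{finite extension} to $A_0$ to obtain a finite diversity $B$ containing $A_0$ as a subdiversity such that every partial isoversity of $A_0$ extends to a full autoversity of $B$. In particular, each $f_i^0$ extends to an autoversity $\tilde{f}_i^0$ of $B$, and the witnessing $n$-system is defined to be $\A := (B,(\tilde{f}_i^0, B)_{i\leq n})$. The inclusion $A_0 \hookrightarrow B$ embeds $\A_0$ into $\A$, and the crucial feature of this construction is that every partial automorphism of $\A$ is now \emph{total}.

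To check WAP, I would amalgamate any two extensions $\B = (B',(g_i,B'_i)_{i\leq n})$ and $\C = (C',(h_i,C'_i)_{i\leq n})$ of $\A$ in $\D_p^n$ by taking the free amalgam $(D,\delta_D) := B' \cup_B C'$ of the underlying diversities over $B$ (via Lemma \ref{freeamal}). Because $\tilde{f}_i^0$ is total on $B$, both $g_i$ and $h_i$ restrict on $B$ to the common map $\tilde{f}_i^0$, so the glued maps $k_i := g_i \cup h_i$ are well-defined on $B'_i \cup C'_i$. The candidate amalgam is then $\mathcal{D} := (D,(k_i,B'_i \cup C'_i)_{i\leq n})$, and the embeddings $\B,\C \hookrightarrow \mathcal{D}$ will agree on all of $\A$, hence a fortiori on $\A_0$, which gives the WAP.

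The main obstacle is verifying that each $k_i$ is in fact a partial isoversity of $(D,\delta_D)$. The plan is to exploit the structure of the free-amalgam diversity: $\delta_D(X)$ is a minimum of sums $\sum_j \delta(E_j)$ over connected covers $\{E_j\}$ of $X$ whose members lie wholly in $B'$ or $C'$. The images $k_i(E_j)$ again lie wholly in $B'$ or $C'$, and $g_i, h_i$ preserve diversities on their respective domains; connectedness of the intersection graph of the image cover survives because any intersection of two cover-members from opposite sides must occur inside $B$, where $g_i$ and $h_i$ coincide with $\tilde{f}_i^0$, while intersections within a single side are preserved by the respective isoversity. Applying the symmetric argument to $k_i^{-1} = g_i^{-1} \cup h_i^{-1}$ gives the reverse inequality, hence $\delta_D(k_i(X)) = \delta_D(X)$ as required.
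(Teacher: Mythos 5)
Your overall strategy --- reducing to the JEP and WAP for $\D_p^n$ via Theorem \ref{ampleiffwapjep}, using Theorem \ref{finite extension} to replace the given system by one whose partial maps are total, and then amalgamating freely --- is exactly the paper's strategy, and your JEP argument is fine. The gap is in the final verification, which you yourself flag as ``the main obstacle''. After the first application of Theorem \ref{finite extension} the maps $\tilde f^0_i$ are total on $B$, but the maps $g_i$ and $h_i$ of the two further extensions $\B$ and $\C$ are still only \emph{partial}, with domains $B'_i\subset B'$ and $C'_i\subset C'$. In the free amalgam $D=B'\cup_B C'$ the value $\delta_D(X)$ is a minimum over connected covers $\{E_j\}$ of $X$ whose members lie in $B'$ or in $C'$, but those members are \emph{not} required to be subsets of $X$, let alone of $\dom(k_i)=B'_i\cup C'_i$: an optimal cover may pass through auxiliary points of $B'\setminus B'_i$ or $C'\setminus C'_i$, and then $k_i(E_j)$ is simply undefined. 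So the sentence ``the images $k_i(E_j)$ again lie wholly in $B'$ or $C'$'' presupposes something you have not established, and the claim that $k_i$ preserves $\delta_D$ does not go through as written.

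The paper closes exactly this hole by applying Theorem \ref{finite extension} a \emph{second} time, to $C_1$ and $C_2$ themselves, producing enlargements $\widetilde{C}_1$ and $\widetilde{C}_2$ on which the $g^i_j$ extend to \emph{total} autoversities $\tilde g^i_j$; the glued map $\tilde g^i_1\cup\tilde g^i_2$ is then a total bijection of $D=\widetilde{C}_1\cup_B\widetilde{C}_2$, every cover member can be mapped, and the connected-cover argument (including your correct observation that cross-side intersections land in $B$, where the two maps agree with $\tilde f^0_i$) succeeds. Your version can be repaired without this second extension, but only by adding a pruning lemma: show that the minimum defining $\delta_D(X)$ is always attained by a cover whose members are contained in $X\cup B$ (merge the $B'$-members forming a connected component of the intersection graph into their union, likewise for $C'$, then replace each merged member $E$ by $E\cap(X\cup B)$; connected sublinearity and monotonicity show the sum does not increase, and connectivity survives because all remaining adjacencies occur inside $B$). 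Since $X\cup B\subset B'_i\cup C'_i$, such covers do lie in $\dom(k_i)$ and your computation then works. One of these two fixes is needed; as it stands the proposal has a genuine gap at its crucial step.
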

\begin{proof}
We show that for each $n\in \N$, the class $\D_p^n$ of $n$-systems in $\D$ has the WAP. Since it clearly has the JEP, it follows from Kechris and Rosendal's Theorem \ref{ampleiffwapjep} above that $\aut(\U_\Q)$ has ample generics. 

Let therefore $\A=(A,(f_i,A_i))$ be an $n$-system in $\D_p^n$. By the extension theorem above we find a rational diversity $B$ containing $A$ where the partial isoversities of $A$ extend to autoversities of $B$. Let $\tilde{f}_i$ denote the extension of $f_i$ to $B$ and let $\mathcal{B}$ denote the resulting $n$-system in $\D_p^n$. Suppose now that we are given $n$-systems $\mathcal{C}_1= (C_1, (g_1^i,C_1^i))$ and $\mathcal{C}_2=(C_2, (g_2^i,C_2^i))$ and embeddings $\Phi_j\colon \mathcal{B} \to \mathcal{C}_j$, $j=1,2$. We need to construct an amalgam of $\mathcal{C}_1$ and $\mathcal{C}_2$ over $\mathcal{B}$. To do that we apply the extension theorem to both $C_1$ and $C_2$ and get $\widetilde{C}_1$ and $\widetilde{C}_2$ where the partial isoversities $g_1^i$ and $g_2^i$ extend to full autoversities $\tilde{g}_1^i$ and $\tilde{g}_2^i$ of $\widetilde{C}_1$ and $\widetilde{C}_2$, respectively. Denote the resulting $n$-systems by $\widetilde{\mathcal{C}}_1$ and $\widetilde{\mathcal{C}}_2$. As usual we can assume that $B=\widetilde{C}_1\cap \widetilde{C}_2$. Therefore we can construct the free amalgam $D$ of $\widetilde{C}_1$ and $\widetilde{C}_2$ over $B$. Moreover, we can define an $n$-system using $D$ by letting $h_i$ be $\tilde{g}_1^i\cup \tilde{g}_2^i$, which is an autoversity of $D$. Denote the resulting $n$-system by $\mathcal{D}$. In diagram form for the $n$-systems:
\[\begin{tikzpicture}
\node(a){}; 
\node(b)[right of = a,node distance = 2cm]{$\mathcal{C}_1$};
\node(c)[right of = b,node distance = 2cm]{};

\node(d)[below of = a, node distance = 0.75cm]{$\B$};
\node(e)[right of = d,node distance = 3cm, scale=1.5]{$\circlearrowleft$};

\node(g)[below of = d,node distance = 0.75cm]{};
\node(h)[right of = g,node distance = 2cm]{$\mathcal{C}_2$};

\node(i)[right of = b,node distance = 2cm]{$\widetilde{\mathcal{C}}_1$}; 
\node(j)[right of = h,node distance = 2cm]{$\widetilde{\mathcal{C}}_2$}; 

\node(k)[right of = e, node distance = 3cm]{$\mathcal{D}$};

\node(f)[left of = d, node distance = 1.5cm]{$\A$};

\draw[thick,->](f) to node [above]{}(d);
\draw[thick,->](d) to node [above]{} (b);
\draw[thick,->] (d) to node [below]{} (h);
\draw[thick,->] (b) to node [above]{} (i);
\draw[thick,->] (h) to node [below]{} (j);
\draw[thick,->] (i) to node [below]{} (k);
\draw[thick,->] (j) to node [below]{} (k);
\end{tikzpicture}
\]
and in diagram form for $j=1,2$ and each $i$:
\[\begin{tikzpicture}
\node(a){$B$}; 
\node(b)[below of= a, node distance=2cm]{$B$};

\node(m)[left of= a, node distance=2.5cm]{$A_i$};
\node(n)[below of= m, node distance=2cm]{$f_i(A_i)$};

\node(c)[right of = a, node distance = 1.25cm]{};
\node(d)[below of = c, node distance=1cm, scale=1.5]{$\circlearrowleft$};
\node(o)[left of = d, node distance=2.5cm, scale=1.5]{$\circlearrowleft$};

\node(e)[right of = a,node distance = 2.5cm]{$C_j^i$};
\node(f)[right of = b,node distance = 2.5cm]{$g^i_j(C^i_j)$}; 

\node(g)[right of = e,node distance = 2.5cm]{$\widetilde{C}_j$};
\node(h)[right of = f,node distance = 2.5cm]{$\widetilde{C}_j$};

\node(i)[right of = d, node distance=2.5cm, scale=1.5]{$\circlearrowleft$};
\node(l)[right of = i, node distance=2.5cm, scale=1.5]{$\circlearrowleft$};

\node(j)[right of = g,node distance = 2.5cm]{$D$};
\node(k)[right of = h,node distance = 2.5cm]{$D$};

\draw[thick,->](a) to node [left]{$\tilde{f}_i$} (b);
\draw[thick,->](b) to node [below]{} (f);
\draw[thick,->] (a) to node [above]{} (e);
\draw[thick,->] (e) to node [left]{$g^i_j$} (f);
\draw[thick,->] (e) to node [above]{} (g);

\draw[thick,->] (f) to node [above]{} (h);
\draw[thick,->] (g) to node [left]{$\tilde{g}_j^i$} (h);

\draw[thick,->] (g) to node [above]{} (j);
\draw[thick,->] (h) to node [right]{} (k);

\draw[thick,->] (j) to node [right]{$h_i$} (k);

\draw[thick,->] (m) to node [right]{} (a);
\draw[thick,->] (m) to node [left]{$f_i$} (n);
\draw[thick,->] (n) to node [right]{} (b);
\end{tikzpicture}
\]
It is easy to check that $\mathcal{D}$ is an amalgam of $\mathcal{C}_1$ and $\mathcal{C}_2$ over $\B$. Therefore, we conclude that $\D_p^n$ has the WAP and hence that $\aut(\U_{\Q})$ has ample generics. 
\end{proof}

\bibliographystyle{plain}
\nocite{katetov,kechris,melleraysurvey,urysohn27,fraisse,BenBerMel2013,pestov2002,CamVer2006,sabok19,malicki16,TentZiegBound,TentZieg13,BeckerKechris}

\end{document}